        \newtheorem{lemma}{Lemma}[section]
        \newtheorem{proposition}[lemma]{Proposition}
        \newtheorem{theorem}[lemma]{Theorem}
        \newtheorem{definition}{Definition}[section]
        \newtheorem{remark}[lemma]{Remark}
\title{A parabolic inverse problem with mixed boundary data.
Stability estimates for the unknown boundary and impedance}
\author{V. Bacchelli\footnotemark[1],\hspace{1em}
M. Di Cristo\footnotemark[2],\hspace{1em}
E. Sincich\footnotemark[3],\hspace{1em}
S. Vessella\footnotemark[4]}
\begin{document}

\date{}

\maketitle
\footnotetext[1]{Politecnico di Milano, e-mail: \texttt{valeria.bacchelli@polimi.it}}
\footnotetext[2]{Politecnico di Milano, e-mail: \texttt{michele.dicristo@polimi.it}}
\footnotetext[3]{Universit\`a di Trieste, e-mail: \texttt{esincich@units.it}}
\footnotetext[4]{Universit\`a di Firenze, e-mail: \texttt{sergio.vessella@dmd.unifi.it}}

\setcounter{section}{0}
\setcounter{secnumdepth}{2}

\begin{abstract}
We consider the problem of determining an unaccessible part of the boundary of a conductor
by mean of thermal measurements. We study a problem of corrosion where a Robin type condition
is prescribed on the damaged part and we prove logarithmic stability estimate.
\end{abstract}

\section{Introduction}
In this paper we consider the problem of determine an inaccessible portion $I$ of the boundary of a conductor
body $\Omega\subset\mathbb R^n$ by mean of thermal measurements, performed on an accessible part $A$ of its boundary.
In particular we analyze the situation in which there might be a corrosion occurring on $I$ and our aim is
to recover information on this damaged part that can not be directly inspected.
This leads to a parabolic equation with a Robin type condition on the inaccessible part of $\partial\Omega$ and
a Dirichlet or Neumann condition, according whether we prescribe a temperature or a heat flux, on $A$
(see \cite{Bi-Ce-Fa-In, Bi-Fa-In, In}).
This kinds of boundary conditions are known as mixed type.

Assuming $\partial\Omega=\overline A\cup \overline I$
and $\mathrm{Int}_{\partial\Omega}(A)\cap\mathrm{Int}_{\partial\Omega}(I)=\emptyset$
and denoting by $\gamma(x,t)$ the surface impedance on $I$,
we prescribe a heat flux $g$ on $A$ that induces a temperature $u$ in $\Omega$ solution to
\begin{equation}
\label{eq-tot}
\left\{\begin{array}{ll}
u_t=\Delta u & \textrm{in }\Omega\times[0,T],\\[2mm]
u(x,0)=0 & \textrm{in }\Omega,\\[2mm]
\displaystyle{\frac{\partial u}{\partial\nu}(x,t)}=g(x,t) & \textrm{on }A\times[0,T],\\[2mm]
\displaystyle{\frac{\partial u}{\partial\nu}}+\gamma u=0 & \textrm{on }I\times[0,T],
\end{array}\right.
\end{equation}
where $\nu$ is the outer unit normal to $\partial\Omega$.

The inverse problem we are addressing to is to recover information on the unknown part $I$
of the boundary and on the impedance coefficient
when thermal measurements of the form $\{g, u_{|\Sigma}\}$,
where $\Sigma\subset A,$ are available.
Particularly we are interested in the issue of stability, that is we want to study the continuous dependence
of the solution $I$ from the boundary data.

This problem has been considered in the stationary case in \cite{Al-Be-Ro-Ve} with a Dirichlet or Neumann condition
on the unknown boundary $I$. The authors show that, keeping as minimal as possible the a priori assumptions
on the unknowns, the solution depends continuously on the boundary measurements with a rate of continuity of
logarithmic type, which is the best possible as shown in \cite{DC-Ro}.
The non stationary analysis has been carried on in \cite{Ca-Ro-Ve}
(see also \cite{Ve} for a first study of this problem).
Also here stability estimates of logarithmic type are provided.
A refined analysis of the problem has been proposed in \cite{DC-Ro-Ve}.
Beside the more general framework considered by the authors, precisely they deal with thermal conductivity
depending on time and space, this article contains a detailed study on the optimality of logarithmic rate of continuity
in the parabolic case.

The problem of determine part of the boundary with Robin type condition has been considered in \cite{Ba} where
a uniqueness result in the stationary case is proved, provided two measurements are performed.
In view of the example given in \cite{Ca-Kr}, the number of measurements turns out to be optimal.
Stability estimates of logarithmic type has been obtained in \cite{Si}.
This result is optimal as well (see \cite{DC-Ro}).
Let us finally mention \cite{Pa-Pi},
where a uniqueness result under weaker regularity assumptions on the boundary has been obtained.

In this paper we show that $I$ depends on thermal boundary measurements with a rate of continuity
of logarithmic type. As in the elliptic case, we perform two boundary measurements, precisely we prescribe two
different heat fluxes and we read the corresponding temperatures on a portion of the accessible part
of the boundary.
The optimality of two measurements is still an unsolved question. We also believe that the argument used in
\cite{DC-Ro-Ve} to prove exponential instability could be applied in the present setting through minor adaptations.

Main ideas and tools can be outlined as follows.
\begin{itemize}
\item[i)] Evaluating how much the error on measurements can effect the error on an auxiliary function $\lambda$
obtained as the ratio of the solutions $\tilde u$ and $u$ corresponding to the heat fluxes $\tilde g$
and $g$. Such a control has been obtained by combining two arguments.
The first relies on smallness propagation estimates based on an iterated use of two--sphere and one--cylinder
inequality (\cite{Es-Fe-Ve, Ve2}). The latter is a lower bound for the solution $u$ achieved by
combining the Harnack inequality up to the inaccessible boundary (see Proposition \ref{harnack-lemma})
with an iterated application of the interior Harnack inequality (\cite{Mo, Mo2}).

\item[ii)] A lower bound for $\lambda$ which has been established by the use of quantitative estimates
of unique continuation and a proper choice of the given heat fluxes. Precisely we prescribe functions $g$
and $\tilde g$ that are linearly independent with a quantitative control of such an independence.

\item[iii)] Using i) and ii) we prove a first rough estimate of log--log type for the Hausdorff distance
between the unknown domains.
Employing, then, in a more refined way the above mention estimates, in particular using the two--sphere
one--cylinder inequality at the boundary (see Theorem \ref{tre-teo} and \cite{Es-Fe-Ve, Ve2}),
and a geometric argument, we get the logarithmic estimate. The stability for the unknown impedance
follows from stability estimates for the underlying Cauchy problem and the stability result
for the unknown boundary.
\end{itemize}

For the sake of exposition, we have chosen to study the inverse problem with the constant coefficients
equation. All proofs, though, can be simply adapted to the equation with coefficients depending
on time and space with reasonable assumptions on them.
Indeed, we deal with an auxiliary function $\lambda$ that solves an equation with variable coefficients.

The plan of the paper is the following. The main result is stated in Section \ref{sec2},
where we also give notations and definitions. In Section \ref{sec3} we provide a proof of this result based
on some auxiliary propositions proved in the subsequent Section \ref{sec4}.

\section{Main Result}
\label{sec2}
We begin by giving some notations and definitions.
For every $x\in\mathbb R^n$, $n\geq2$, $x=(x_1,\dots,x_n)$, we set $x=(x',x_n)$,
where $x'\in\mathbb R^{n-1}$ and $x_n\in\mathbb R$.
We denote by $B_r(x)$ and $B_r'(x')$ respectively the open ball in $\mathbb R^n$
centered at $x$ of radius $r$ and the open ball in $\mathbb R^{n-1}$ centered at $x'$ of radius $r$.
Sometimes we shall write $B_r$ and $B'_r$ instead of $B_r(0)$ and $B'_r(0)$ respectively.
For given numbers $r,t>0$, and a function $\varphi$ defined on $\mathbb R^{n-1}\times \mathbb R$, we define
$\Omega_r=\{x\in\Omega\,:\,\mathrm{dist}(x,\partial\Omega)>r\}$,
$Q_{r,\varphi}^t=\{(x,s)\in B_r\times(t-r^2,t)\,:\,x_n>\varphi(x')\}$
and $\Gamma_{r,\varphi}^t=\{(x,s)\in B'_r\times(t-r^2,t)\,:\,x_n=\varphi(x')\}$.

Denoting by $D$ a open subset of $\mathbb R^{n+1}$, for a function $f$ defined on $D$ and $\alpha\in(0,1]$, we define
$$[f]_{\alpha;D}=\sup\left\{\frac{|f(x,t)-f(y,s)|}{(|x-y|^2+|t-s|)^{\alpha/2}}\,:\,(x,t),(y,s)\in D, (x,t)\neq(y,s)\right\}.$$
If $\alpha\in(0,2]$, we set
$$<f>_{\alpha;D}=\sup\left\{\frac{|f(x,t)-f(y,s)|}{|t-s|^{\alpha/2}}\,:\,(x,t),(y,s)\in D, t\neq s \right\}.$$
Let $k$ be a positive integer, $D$ an open subset of $\mathbb R^{n+1}$, $f$ a sufficiently smooth function
and $\alpha\in(0,1]$. We denote by
\begin{eqnarray*}
&&[f]_{k+\alpha;D}=\sum_{|\beta|+2j=k}[\partial_x^\beta\partial_t^j f]_{\alpha;D},\\[2mm]
&&<f>_{k+\alpha;D}=\sum_{|\beta|+2j=k-1}<\partial_x^\beta\partial_t^j f>_{1+\alpha;D},
\end{eqnarray*}
where for a multi-index $\beta=(\beta_1,\dots,\beta_n)$, $\beta_i\in\mathbb N\cup\{0\}$, $i=1,\dots,n$, we
have used the notation
$\partial_x^\beta\partial_t^j f=\frac{\partial^{|\beta|+k}f}{\partial_{x_1}^{\beta_1}
\dots \partial_{x_n}^{\beta_n}\partial_{t}^{k}}$, with $|\beta|=\sum_{i=1}^{n}\beta_i$.
If $\alpha\in(0,1]$ and $[f]_{\alpha;D}$ is finite, we shall say that $f$ belongs to $C^{0,\alpha}(D)$.
Let $k$ be a positive integer, $\alpha\in(0,1]$ and $D$ an open subset of $\mathbb R^{n+1}$,
we shall say that $f$ belongs to the class $C^{k,\alpha}(D)$ whenever for every non-negative integer $j$
such that $|\beta|+2j\leq k$, there exist the derivatives $\partial_{x}^{\beta}\partial_{t}^{j}f$
and the quantities $\sup_D |\partial_{x}^{\beta}\partial_{t}^{j}f|$, $[f]_{k+\alpha;D}$ and
$<f>_{k+\alpha;D}$ are finite.
If $f$ is a function not depending on $t$, we keep the definition above by considering a function $\tilde f$
defined on $\Omega\times\mathbb R$, $\Omega\subset\mathbb R^n$, such that $\tilde f(x,t)=f(x)$
for every $(x,t)\in\Omega\times\mathbb R$
and we shall say that $f\in C^{k,\alpha}(\Omega)$ whenever $\tilde f\in C^{k,\alpha}(\Omega\times\mathbb R)$.
Throughout the paper we will make use of standard  Sobolev spaces.
We refer the reader to \cite{Li-Ma} for details.

\begin{definition}
Let $\Omega$ be a domain in $\mathbb R^n$. Given $\alpha$, $\alpha\in(0,1]$ and $k$, $k\in\mathbb N$,
we say that $\partial\Omega$
is of class $C^{k,\alpha}$ with constants $r_0,L$ if for any $P\in\partial\Omega$ there exists a rigid transformation
of $\mathbb R^n$ under which we have $P\equiv0$ and
$$\Omega\cap B_{r_0}=\{x\in B_{r_0}\,:\,x_n>\varphi(x')\},$$
where $\varphi$ is a $C^{k,\alpha}$ function on $B'_{r_0}$ satisfying the following condition
$\varphi(0)=|\nabla_{x'}\varphi(0)|=0$ and $\|\varphi\|_{C^{k,\alpha}(B'_{r_0})}\leq Lr_0$.
\end{definition}
\begin{remark}
We have chosen to normalize all norms in such a way that their terms are dimensional homogeneous and
coincide with the standard definition when $r_0=1$. For instance, for any
$\varphi\in C^{k,\alpha}(B'_{r_0}\times(-r^2_0,r^2_0))$ we set
\begin{eqnarray*}
&&\|\varphi\|_{C^{k,\alpha}(B'_{r_0}\times(-r^2_0,r^2_0))}=
\sum_{l=0}^{k} r_0^l\sum_{|\beta|+2j=l}\|\partial_x^\beta\partial_t^j\varphi\|_{L^\infty(B'_{r_0}\times(-r^2_0,r^2_0))}\\
&&\qquad\qquad +r_0^{k+\alpha}\left(<\varphi>_{k+\alpha;B'_{r_0}\times(-r^2_0,r^2_0)}
+[\varphi]_{k+\alpha;B'_{r_0}\times(-r^2_0,r^2_0)}\right).
\end{eqnarray*}
Similarly we set
$$\|u\|_{L^2(D)}=r_0^{-\frac{n+2}{2}}\left(\int_D u^2dxdt\right)^{1/2},$$
where $D$ is a domain in $\mathbb R^{n+1}$.
\end{remark}
We shall use letters $C,C_0,C_1,\dots$ to denote constants. The value of these constants may
change from line to line and their dependance will specified everywhere they appear.

\noindent\textbf{Assumptions on the domain}.
Let $r_0,M,L$ be given positive numbers. We assume that $\Omega$ is a bounded domain in $\mathbb R^n$ such that
\begin{subequations}
\label{Omega}
\begin{equation}
\label{Omega1}
|\Omega|\leq Mr_0^n,
\end{equation}
where $|\Omega|$ denotes the Lebesgue measure,
\begin{equation}
\label{Omega2}
\partial\Omega=\overline A\cup \overline I,
\end{equation}
where $A$ and $I$ are open subsets of $\partial\Omega$,
$A\cap I=\emptyset$, and
\begin{equation}
\label{Omega3}
\partial\Omega \textrm{ is of class } C^{1,1} \textrm{ with constants } r_0,L.
\end{equation}
Also, we denote by $\Sigma$ an open portion of $\partial\Omega$ so that there exists a point $P_0\in\Sigma$
such that
\begin{equation}
\label{Omega4}
\partial\Omega\cap B_{r_0}(P_0)\subset\Sigma.
\end{equation}
\end{subequations}
\textbf{Assumptions on the boundary data}. Given positive constants $E,\Phi_0,\Phi_1$,
on the accessible part $A$ of the boundary of $\Omega$
we shall prescribe two different heat fluxes $g$ and $\tilde g$ such that
\begin{subequations}
\label{phi}
\begin{equation}
\label{phi0}
g,\tilde g\in C^{0,1}(A\times[0,T]),
\end{equation}
\begin{equation}
\label{phi00}
\mathrm{supp}\,g,\,\,\mathrm{supp}\,\tilde g\subset A^{r_0}\times[0,T],
\end{equation}
where $A^{r_0}=\{x\in A\,:\,\mathrm{dist}(x,I)>r_0\}$ ,
\begin{equation}
\label{phi0001}
A^{2r_0}\neq\emptyset,
\end{equation}
\begin{equation}
\label{phi001}
\|g\|_{C^{0,1}(A\times[0,T])},\,\|\tilde g\|_{C^{0,1}(A\times[0,T])}\leq E,
\end{equation}
\begin{equation}
\label{phib}
\exists\, t_{1}>0\text{ such that }g(x,t)=\widetilde{g}(x,t),\quad\text{ for }0\leq
t\leq t_{1},\,x\in A,
\end{equation}
\begin{equation}
\label{phia}
\left\|\frac{\tilde g}{g}-\left(\frac{\tilde g}{g}\right)_{A\times[0,T]}\right\|_{L^2(A\times[0,T])}\geq\Phi_0>0,
\end{equation}
where
$$\left(\frac{\tilde g}{g}\right)_{A\times[0,T]}=\frac{1}{|A|T}\int_{A\times[0,T]}\frac{\tilde g}{g}d\sigma dt,$$
and
\begin{equation}
\label{039}
g(x,t)\geq \Phi _{1} r_0^{-1}>0,\text{ in }A^{2r_0}\times [0,T].
\end{equation}
\end{subequations}

\noindent\textbf{Assumptions on the surface impedance}.
Given a positive number $\overline\gamma$,
the surface impedance $\gamma$ of the unknown boundary $I$ is such that
\begin{subequations}
\label{gamma}
\begin{equation}
\label{gamma0}
\gamma\in C^{0,1}(I\times[0,T]),
\qquad\textrm{with}\qquad
\textrm{supp}\gamma \subset I\times[0,T]
\end{equation}
and
\begin{equation}
\label{040}
0\leq\gamma(x,t)\leq\overline{\gamma}.
\end{equation}
\end{subequations}

\begin{remark}
\label{regol}
By \cite[Theorem 6.46, page 141]{Li}, if \eqref{Omega1}, \eqref{Omega3}, \eqref{phi0}, \eqref{phi00},
\eqref{gamma0} are fulfilled,
there exists a unique solution $u$ of the problem \eqref{eq-tot},
$u\in C^{1,\alpha}(\overline\Omega\times[0,T])$ such that
\begin{equation}
\label{reg-lib}
\|u\|_{C^{1,\alpha}(\overline\Omega\times[0,T])}\leq C_0\|g\|_{C^{0,1}(A\times[0,T])},
\quad\forall\alpha\in(0,1),
\end{equation}
where $C_0$ is a positive constant depending $n,\Omega,\overline\gamma$.
\end{remark}
\noindent From now on we shall fix an $\alpha$, $\alpha\in[1/2,1)$.

\noindent We denote by $\Omega_i$, $i=1,2$, two bounded domains on $\mathbb R^n$ satisfying \eqref{Omega} such that
$$\partial\Omega_i=\overline A\cup \overline I_i,\qquad
\textrm{Int}_{\partial\Omega_i}(A)\cap \textrm{Int}_{\partial\Omega_i}(I_i)=\emptyset,
\qquad i=1,2,$$
where the accessible part $A$ of the boundary is the same for both sets
and by $\gamma_i(x,t)$, $i=1,2$, the boundary impedance on $I_i$, $i=1,2$, respectively satisfying \eqref{gamma}.
Let also $t_2$ and $t_3$ such that $0<t_1<t_2<t_3\leq T$.

In the sequel we shall refer to numbers $M,L,\Phi_0,\Phi_1,\overline\gamma,r^2_0/T,r^2_0/t_1$,
as the a priori data.

\begin{theorem}
\label{main-theor}
For $i=1,2$, let $u_i\in C^{1,\alpha}(\overline\Omega\times[0,T])$ be the solution to \eqref{eq-tot} when
$\Omega=\Omega_i$, $\gamma=\gamma_i$ and let $\tilde u_i\in C^{1,\alpha}(\overline\Omega\times[0,T])$
be the solution to \eqref{eq-tot}
when $\Omega=\Omega_i$, $\gamma=\gamma_i,$ $g=\tilde g$.
Let \eqref{Omega1}--\eqref{040} be satisfied.
Assume for $\varepsilon>0$
\begin{equation}
\label{eps}
\begin{array}{l}
\|u_1-u_2\|_{L^2(\Sigma\times[t_1,T])}\leq\varepsilon,\\[2mm]
\|\tilde u_1-\tilde u_2\|_{L^2(\Sigma\times[t_1,T])}\leq\varepsilon,
\end{array}
\end{equation}
where $\Sigma\subset A$ is an open subset,
then
\begin{equation}
\label{tesi}
d_\mathcal H(\overline\Omega_1,\overline\Omega_2)\leq r_0\eta(\varepsilon),
\end{equation}
where $\eta$ is a continuous increasing function on $[0,+\infty)$ satisfying
\begin{equation}
\label{eta}
\eta(s)\leq C|\log s|^{-\beta},
\end{equation}
for every $0<s<1$, with $C>0$ and $\beta$ depending on the a priori data only.
Furthermore, for any $t\in(t_1,T)$,
\begin{eqnarray}
\label{gasta}
{\sup_{\begin{array}{c}{\scriptstyle{P\in {I_1}^{r_0}}}\\
\scriptstyle{Q \in B_{2\eta(\varepsilon)}(P)\cap {I_2}^{r_0}}
\end{array}}
|\gamma_2(Q,t)-\gamma_1(P,t)|}\leq \eta(\varepsilon)\ ,
\end{eqnarray}
where $\eta$ is defined as in \eqref{eta} up to a possible replace of constants $C$ and $\beta$.
\end{theorem}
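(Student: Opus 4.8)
The first move is to replace the two pairs of solutions by a single auxiliary function. In each $\Omega_i$ set $\lambda_i=\tilde u_i/u_i$. By \eqref{phib} and uniqueness for \eqref{eq-tot} we have $u_i\equiv\tilde u_i$ for $t\le t_1$, hence $\lambda_i\equiv1$ on $\Omega_i\times[0,t_1]$, which is why the interval $[t_1,T]$ is singled out. To make $\lambda_i$ a legitimate object I first record a two-sided bound on $u_i$: the upper bound $\|u_i\|_{C^{1,\alpha}(\overline\Omega_i\times[0,T])}\le C_0E$ from Remark \ref{regol} and \eqref{phi001}, and a positive lower bound $u_i\ge c>0$ on $\overline\Omega_i\times[t_1,T]$, with $c$ depending on $r_0$ and the a priori data, obtained by combining the Harnack inequality up to the inaccessible boundary (Proposition \ref{harnack-lemma}) with an iterated application of the interior Harnack inequality (\cite{Mo,Mo2}), starting from the strict positivity of $g$ on $A^{2r_0}\times[0,T]$ imposed by \eqref{039}. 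A direct computation then shows that $\lambda_i$ solves the variable-coefficient parabolic equation $\partial_t\lambda_i=\Delta\lambda_i+2(\nabla u_i/u_i)\cdot\nabla\lambda_i$ in $\Omega_i\times[t_1,T]$, with the \emph{$\gamma_i$-independent} homogeneous Neumann condition $\partial_\nu\lambda_i=0$ on $I_i\times[t_1,T]$, while on $A$ one has $u_i\,\partial_\nu\lambda_i+\lambda_i g=\tilde g$. The $C^{1,\alpha}$ bound together with the lower bound on $u_i$ keeps the drift $\nabla u_i/u_i$ bounded by an a priori constant up to $I_i$, so the quantitative unique-continuation tools apply uniformly. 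This step also explains the decoupling: $I_i$ is encoded in $\lambda_i$ without any reference to $\gamma_i$, so the unknown boundary can be estimated first and the impedance recovered afterwards from the Cauchy problem for $u_i$.

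Next I estimate how the measurement error controls $w:=\lambda_1-\lambda_2$. Let $G$ be the connected component of $\Omega_1\cap\Omega_2$ having $\Sigma$ on its boundary; near the interior of $A$ the two domains coincide, so $G$ contains a one-sided collar of $\Sigma$. Writing $w=\big(\tilde u_1(u_2-u_1)+u_1(\tilde u_1-\tilde u_2)\big)/(u_1u_2)$ and using the two-sided bounds, the $L^2(\Sigma\times[t_1,T])$ smallness in \eqref{eps}, interior regularity and interpolation give a small Cauchy datum for $w$ on $\Sigma\times[t_1+\delta,T]$ for a fixed margin $\delta$, the Neumann trace of $w$ on $A$ being controlled through the relation $u_i\,\partial_\nu\lambda_i+\lambda_i g=\tilde g$. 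Since $w$ solves a parabolic equation with bounded coefficients in $G\times[t_1,T]$, an iterated use of the two-sphere and one-cylinder inequality (\cite{Es-Fe-Ve,Ve2}) propagates this smallness: for $x_0\in G$ at distance $\rho$ from $\partial G$ and for $t$ bounded away from $t_1$, one obtains $|w(x_0,t)|\le\omega_\rho(\varepsilon)$, where the modulus $\omega_\rho$ deteriorates with $\rho$ because the number of balls in the connecting chain grows like a negative power of $1/\rho$; quantitatively $\omega_\rho(\varepsilon)$ is of logarithmic type with an exponent that itself degrades with $\rho$, and this is the source of the initial log--log rate.

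I then establish a lower bound for $\lambda_i$ and combine it with the propagation estimate. For the lower bound: if $\lambda_i$ were within $\eta$ of some constant $c$ on a fixed interior ball, quantitative unique continuation for the $\lambda_i$-equation (stability for its Cauchy problem) would force $\lambda_i$ close to $c$ and $\partial_\nu\lambda_i$ close to $0$ on the portion of $A$ carrying $g$, whence $\tilde g\approx cg$ there, contradicting the quantitative linear independence \eqref{phia} once $\eta$ is below a threshold depending on $\Phi_0,E,\Phi_1$; this yields $\|\lambda_i-(\lambda_i)_G\|\ge c_\ast>0$ on a fixed region, with $c_\ast$ an a priori constant. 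For the rough estimate one argues by contradiction in the usual way: let $d:=d_{\mathcal H}(\overline\Omega_1,\overline\Omega_2)$, pick $P_1$ realizing it up to a factor, say $P_1\in I_1$, flatten $I_1$ near $P_1$ and use the homogeneous Neumann condition to extend $\lambda_1$ across $I_1$ as a solution of a parabolic inequality in a full cylinder of size $\simeq d$; comparing $\lambda_1$ there with $\lambda_2$ via the smallness of $w$ in the part of the cylinder lying in $G$, and invoking the interior lower bound through a three-cylinder/doubling argument, one gets $d\le r_0\,\omega(\varepsilon)$ with $\omega$ of log--log type.

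Finally the rate is upgraded and the impedance treated. Once $d$ is known to be small, the region in which $w$ can be propagated reaches within distance $\simeq d$ of the unknown boundaries at a controlled cost; applying the two-sphere and one-cylinder inequality \emph{at the boundary} (Theorem \ref{tre-teo}) in flattened coordinates near $I_1$ and $I_2$, together with a geometric argument exploiting the $C^{1,1}$ regularity of $\partial\Omega_i$ — controlling the shape of $\Omega_1\triangle\Omega_2$ and converting a small bound on $w$ in a thin one-sided neighbourhood into a bound on $d$ — one bootstraps the log--log bound into $d_{\mathcal H}(\overline\Omega_1,\overline\Omega_2)\le r_0\eta(\varepsilon)$ with $\eta$ satisfying \eqref{eta}. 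For \eqref{gasta}: with $d_{\mathcal H}(I_1,I_2)\le2\eta(\varepsilon)$ in hand, fix $t\in(t_1,T)$, $P\in I_1^{r_0}$ and $Q\in B_{2\eta(\varepsilon)}(P)\cap I_2^{r_0}$; stability for the Cauchy problem for the heat equation (propagating \eqref{eps} through $G$ up to the common boundary and then, by boundary unique continuation, up to $I_1\approx I_2$) gives $|u_1-u_2|$ and $|\nabla u_1-\nabla u_2|$ bounded by $\eta(\varepsilon)$ near $P$ and $Q$; inserting the Robin conditions $\partial_\nu u_i=-\gamma_i u_i$ on $I_i$ together with the positive lower bound on $u_i$ yields $|\gamma_1(P,t)-\gamma_2(Q,t)|\le\eta(\varepsilon)$, after a harmless relabelling of the constants in \eqref{eta}. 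I expect the principal obstacle to be this last step: controlling, with constants depending only on the a priori data, the degeneration of the smallness-propagation modulus as one approaches the unknown boundary, and then carrying out the geometric comparison — via the boundary two-sphere one-cylinder inequality — that turns a thin-neighbourhood estimate into a single logarithm rather than the double logarithm produced by the rough estimate. A secondary technical point is the uniform control of the drift $\nabla u_i/u_i$ up to $I_i$, which rests essentially on Proposition \ref{harnack-lemma}.
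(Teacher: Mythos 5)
Your overall architecture coincides with the paper's: the ratio $\lambda_i=\tilde u_i/u_i$ with a homogeneous (co)normal condition on $I_i$, the lower bound on $u_i$ via Harnack up to the inaccessible boundary plus iterated interior Harnack, the lower bound for $\lambda_i$ extracted from \eqref{phia} through stability for the Cauchy problem, the two--sphere one--cylinder inequality at the boundary to convert smallness of $\lambda_1$ outside $\Omega_1\cap\Omega_2$ into a bound on the (modified) distance, the log--log rough estimate upgraded to a single logarithm once the intersection is known to have Lipschitz boundary, and the recovery of $\gamma_i=-\partial_\nu u_i/u_i$ from $C^1$-closeness of $u_1,u_2$ near $I_2$ together with the positive lower bound. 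Two remarks are in order.

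First, there is one step that fails as written. You claim that $w=\lambda_1-\lambda_2$ ``solves a parabolic equation with bounded coefficients in $G\times[t_1,T]$'' and propose to propagate its smallness from $\Sigma$ by iterating the two--sphere one--cylinder inequality on $w$ itself. But $\lambda_1$ and $\lambda_2$ solve \emph{different} equations, $\partial_t\lambda_i=\mathrm{div}(u_i^2\nabla\lambda_i)$ with distinct weights $u_1^2\neq u_2^2$; their difference satisfies an inhomogeneous equation with source $\mathrm{div}\bigl((u_1^2-u_2^2)\nabla\lambda_2\bigr)$, which is not dominated by $|w|/R^2+|\nabla w|/R$, so the hypothesis of Theorem \ref{tre-teo} is violated and the chain-of-balls propagation cannot be applied to $w$. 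The paper's remedy is exactly the identity you already wrote down, deployed at the other end of the argument: one propagates smallness of $u_1-u_2$ and $\tilde u_1-\tilde u_2$ separately (each is caloric in $G$ with zero initial data, vanishing Neumann trace on $A\setminus\mathrm{supp}\,g$, and small Cauchy data on $\Sigma$, so Theorem \ref{teo3.3.1} and the interior two--sphere one--cylinder inequality apply), and only afterwards assembles $\lambda_1-\lambda_2$ and $\nabla\lambda_1-\nabla\lambda_2$ algebraically, feeding the result into a Gronwall/energy identity for $\lambda_1$ on $\Omega_1\setminus G$. Your proof should be reorganized accordingly; the ingredients are all present in your text.

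Second, a smaller omission: the Hausdorff distance $d_{\mathcal H}(\overline\Omega_1,\overline\Omega_2)$ need not be attained at a point of $\partial\Omega_1$, so ``pick $P_1\in I_1$ realizing it'' is not available. The paper works with the modified distance $d_m$ of \eqref{731}, which by definition is realized on the boundaries, proves $d_m\leq C\eta^K$ there, and then passes to $d_{\mathcal H}$ through the three-case geometric analysis and Proposition \ref{moddistprop}; the same proposition is what guarantees the Lipschitz regularity of $\partial G$ needed for the improved (single-log) estimate of Proposition \ref{cauchy-data2}. Your ``geometric argument exploiting the $C^{1,1}$ regularity'' should be made precise along these lines.
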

Here $d_\mathcal H$ stands for the Hausdorff distance.

\section{Proof of Theorem \ref{main-theor}}
\label{sec3}

We first observe that the solution $u_{1}\in C^{1,\alpha}(\overline{\Omega}_1\times [0,T])$
of problem \eqref{eq-tot} with boundary data $g$ satisfying \eqref{039}, is such that
\begin{equation}
u_{1}(x,t)>0,\text{ in }\Omega _{1}\times (0,T].  \label{positive}
\end{equation}
Namely, by contradiction, if
\begin{equation*}
\underset{\overline{\Omega}_{1}\times [0,T]}\min u_{1}(x,t)=u_{1}(\overline{x},\overline{t})\leq 0,
\end{equation*}
then, by maximum principle, the point $(\overline{x},\overline{t})$ belongs
to the parabolic boundary.

If $(\overline{x},\overline{t})\in A\times (0,T],$ by Hopf lemma we would
have $\dfrac{\partial u_{1}}{\partial \nu }(\overline{x},\overline{t})<0,$
that contradicts (\ref{039}). If $(\overline{x},\overline{t})\in I\times
(0,T],$ again by Hopf lemma we would have $\dfrac{\partial u_{1}}{\partial
\nu }(\overline{x},\overline{t})<0,$ which contradicts the Robin condition
that $u_{1}$ satisfies on $I$ because%
\begin{equation*}
\frac{\partial u_{1}}{\partial \nu }(\overline{x},\overline{t})
=-\gamma u_{1}(\overline{x},\overline{t})\geq 0.
\end{equation*}
Then $(\overline{x},\overline{t})=(\overline{x},0)$,
\begin{equation*}
\underset{\overline{\Omega }_{1}\times [0,T]}\min u_{1}(x,t)=u_{1}(\overline{x},0)=0
\end{equation*}
and we get \eqref{positive}.

\noindent The same is true for $u_{2}(x,t).$

\noindent By \eqref{positive} we can define, for $i=1,2$,
\begin{equation}
\lambda_i(x,t)=\frac{\widetilde{u}_{i}(x,t)}{u_{i}(x,t)}-1,
\text{ in }\overline{\Omega}_{i}\times [t_{1},T].  \label{52}
\end{equation}
By straightforward calculation we notice that $\lambda _{i}(x,t)$ satisfies
the problem
\begin{equation}
\label{eq-lambda}
\left\{\begin{array}{ll}
\partial_t\lambda_i(x,t)=\mathrm{div}(u_i^2\nabla \lambda_i(x,t)) & \textrm{in }\Omega_i\times[t_1,T],\\[2mm]
\lambda_i(x,t_1)=0 & \textrm{in }\Omega_i,\\[2mm]
u_i^2\displaystyle{\frac{\partial\lambda_i}{\partial\nu}(x,t)
=u_i\tilde g(x,t)-\tilde u_i g(x,t)} & \textrm{on }A\times[t_1,T],\\[2mm]
\displaystyle{u_i^2\frac{\partial \lambda_i}{\partial\nu}}(x,t)=0 & \textrm{on }I_i\times[t_1,T].
\end{array}\right.
\end{equation}
By standard estimates of solutions of parabolic problem \cite{Li},
by (\ref{phi00}), (\ref{phi001}), (\ref{reg-lib}), we have
\begin{equation}
\left\Vert \lambda_{i}\right\Vert _{C^{1,\alpha}(\Omega _{i}\times(t_{1},T))}\leq C,
\label{54}
\end{equation}
where $\alpha\in(0,1)$ and $C$ depends on the a priori data only.

With the change of variable \eqref{52} we can deal with the new problem \eqref{eq-lambda},
where we have a homogeneous Neumann condition on $I$.

In the next propositions, whose proofs are postponed to Section \ref{sec4},
we provide stability estimates of unique continuation from Cauchy data when \eqref{eps} holds true,
then a lower bound on $u$, where $u$ is solution to \eqref{eq-tot} and a lower bound
of the integral  of $\lambda_i$ in term of the boundary data.

The proof of Theorem \ref{main-theor} will be obtained from the following sequence of propositions.

We shall denote by $G$ the connected component of $\Omega_1\cap\Omega_2$ such that $A\subset\overline G$.

\begin{proposition}[Stability estimates of unique continuation from Cauchy data]
\label{cauchy-data}
Let hypothesis of Theorem \ref{main-theor} be satisfied.
Then there exists a positive constant $C$ depending on the a priori data only such that for $i=1,2$
\begin{equation}
\label{sta-lam}
\int_{\Omega_i\setminus G}u^2_i(x,t)\lambda_i^2(x,t)dx\leq Cr_0^n\eta_1(\varepsilon)\qquad \forall\,t\in[t_1,T],
\end{equation}
where $\eta_1$ is an increasing continuous function on $[0,+\infty)$ which satisfies
$$\eta_1(s)\leq(\log|\log s|)^{-\beta_1},$$
for every $0<s<1$, with $\beta_1>0$.
\end{proposition}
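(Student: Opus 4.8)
\medskip
\noindent\emph{Sketch of the argument.} By \eqref{52} we have $u_i\lambda_i=\widetilde u_i-u_i=:h_i$, hence $u_i^2\lambda_i^2=h_i^2$ and \eqref{sta-lam} reduces to an upper bound for $\int_{\Omega_i\setminus G}h_i^2(\cdot,t)\,dx$. The gain is that, since $u_i$ and $\widetilde u_i$ both solve \eqref{eq-tot}, the function $h_i$ is caloric, $\partial_t h_i=\Delta h_i$ in $\Omega_i\times(0,T]$, $h_i(\cdot,0)=0$, $\partial_\nu h_i=\widetilde g-g$ on $A$, $\partial_\nu h_i+\gamma_i h_i=0$ on $I_i$; moreover, by \eqref{phib} and uniqueness for \eqref{eq-tot} on $[0,t_1]$, $h_i\equiv 0$ on $\overline\Omega_i\times[0,t_1]$, so $h_i(\cdot,t_1)=0$, and by \eqref{reg-lib} $\|h_i\|_{C^{1,\alpha}(\overline\Omega_i\times[0,T])}\le C$. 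Consider $\phi:=h_1-h_2=(\widetilde u_1-\widetilde u_2)-(u_1-u_2)$ on $\overline G\times[t_1,T]$: it is caloric in $G\times(t_1,T)$, $\phi(\cdot,t_1)=0$, $\partial_\nu\phi=0$ on $(A\cap\partial G)\times(t_1,T)$ (the fluxes $\widetilde g-g$ cancel), and by \eqref{eps} $\|\phi\|_{L^2(\Sigma\times[t_1,T])}\le 2\varepsilon$, while $\|\phi\|_{C^{1,\alpha}}\le C$. Thus $\phi$ has Cauchy data on $\Sigma$ that vanish up to an $L^2$ error $2\varepsilon$.

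\smallskip
\noindent\textit{Main step: propagation of smallness up to the interface.} Write $\partial G=\overline{A\cap\partial G}\cup\Gamma^{(1)}\cup\Gamma^{(2)}$ with $\Gamma^{(j)}:=\partial G\cap I_j$; note that $\Gamma^{(2)}$ is the interface $\partial(\Omega_1\setminus G)\cap\Omega_1$, lies on $I_2$, and near it $G$ agrees with $\Omega_2$, so $G$ there has a uniform interior ball (symmetric statements for $\Gamma^{(1)}$). Starting from $\Sigma$ via a stability estimate for the Cauchy problem (cf.\ Theorem~\ref{tre-teo}) and then iterating the two--sphere one--cylinder inequality (\cite{Es-Fe-Ve, Ve2}) along a chain of balls inside $G$, one controls $\|\phi\|_{L^2}$ on balls of radius $\gtrsim\rho$ touching $\Gamma^{(2)}_\rho:=\{x\in\Gamma^{(2)}:\operatorname{dist}(x,\partial\Omega_1)>\rho\}$; interpolating this $L^2$--smallness with the a priori bound $\|\phi\|_{C^{1,\alpha}}\le C$ yields $|\phi|+|\nabla\phi|\le C\,\omega_\rho(\varepsilon)$ on $\Gamma^{(2)}_\rho\times[t_1,T]$. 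The number of steps in the chain is $\lesssim r_0/\rho$, so $\omega_\rho(\varepsilon)$ is a power of $\varepsilon$ with exponent comparable to $\mu^{\,r_0/\rho}$ for some $\mu\in(0,1)$; balancing this against the $\lesssim\rho$ contribution of the thin layer one is forced to discard and optimising in $\rho$ produces the stated modulus $\eta_1(s)\le(\log|\log s|)^{-\beta_1}$.

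\smallskip
\noindent\textit{From the interface to $\Omega_i\setminus G$: an energy estimate.} On $\Gamma^{(2)}\subset I_2$ both $u_2$ and $\widetilde u_2$ satisfy the Robin condition with coefficient $\gamma_2$; subtracting and inserting $h_1=h_2+\phi$ shows that, on $\Gamma^{(2)}$, the conormal derivative of $h_1$ (for the outer normal to $\Omega_1\setminus G$) equals $\gamma_2 h_1$ up to a term bounded by $C(|\phi|+|\nabla\phi|)$. Multiplying $\partial_t h_1=\Delta h_1$ by $h_1$ and integrating over $\Omega_1\setminus G$, using $h_1(\cdot,t_1)=0$, the genuine Robin condition on $\partial(\Omega_1\setminus G)\cap\overline{I_1}$ (which contributes with the favourable sign), and the above identity on $\Gamma^{(2)}$ (splitting $\Gamma^{(2)}=\Gamma^{(2)}_\rho\cup(\Gamma^{(2)}\setminus\Gamma^{(2)}_\rho)$ and estimating the second piece by the a priori bound times its $\lesssim\rho$ surface measure), one reaches a differential inequality of the form $\frac{d}{dt}\int_{\Omega_1\setminus G}h_1^2\le C\int_{\Omega_1\setminus G}h_1^2+C(\eta_1^{1/2}(\varepsilon)+\rho)$, the ``wrong sign'' boundary term $\overline\gamma\int_{\Gamma^{(2)}}h_1^2$ having been absorbed through Young's inequality and the trace theorem for $\Omega_1\setminus G$. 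Since $\int_{\Omega_1\setminus G}h_1^2(\cdot,t_1)=0$, Gronwall's lemma gives $\int_{\Omega_1\setminus G}h_1^2(\cdot,t)\le C(\eta_1^{1/2}(\varepsilon)+\rho)$ for all $t\in[t_1,T]$; optimising in $\rho$ and relabelling $\eta_1$ yields \eqref{sta-lam} for $i=1$, and the case $i=2$ is obtained by interchanging the indices.

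\smallskip
\noindent The delicate point is the main step: $\phi$ obeys no boundary condition on the interface $\Gamma^{(2)}\subset I_2$ and $G$ may be thin where $\partial\Omega_1$ and $\partial\Omega_2$ nearly touch, so one has to control the length of the chain of two--sphere one--cylinder inequalities inside $G$ and trade the ensuing loss against the measure of the thin layer discarded; this trade-off is precisely what degrades the rate to iterated--logarithmic type. (One may instead propagate $\lambda_1-\lambda_2$ directly, the equation \eqref{eq-lambda} being uniformly parabolic on $[t_1,T]$ thanks to the lower bound $u_i\ge c_*>0$ there — which is where Proposition~\ref{harnack-lemma} and the lower bound for $u$ enter — at the price of an inhomogeneous term of order $|u_1-u_2|+|\nabla(u_1-u_2)|$.)
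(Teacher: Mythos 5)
Your overall architecture matches the paper's: a Gronwall/energy estimate on the excluded region whose boundary term is controlled by the discrepancy of the two solutions on the interface $\partial\Omega_2\cap\Omega_1$, that discrepancy being propagated from the small Cauchy data on $\Sigma$ through $G$ by iterated two--sphere one--cylinder inequalities, and the log--log rate arising exactly from the trade-off you describe (exponential loss along the chain against the volume of the discarded thin layer). The paper propagates $w=u_1-u_2$ and $\tilde w=\tilde u_1-\tilde u_2$ separately and recombines them through an algebraic identity for $\nabla\lambda_1-\nabla\lambda_2$; your choice of propagating $\phi=\tilde w-w$ directly is an inessential variant.

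The genuine gap is in your energy step, and it is precisely the point that the substitution $\lambda_i=\tilde u_i/u_i-1$ is designed to remove. Working with $h_1=\tilde u_1-u_1$, the boundary term on the interface $\Gamma^{(2)}\subset I_2$ contains $\overline\gamma\int_{\Gamma^{(2)}}h_1^2$, and $h_1$ is merely \emph{bounded} there, not small; you propose to absorb this via Young plus ``the trace theorem for $\Omega_1\setminus G$''. But $\Omega_1\setminus G$ is not known to be Lipschitz with constants controlled by the a priori data --- it can degenerate into cusp-like slivers where $\partial\Omega_1$ and $\partial\Omega_2$ touch tangentially --- so neither the trace constant nor, for that matter, the integration by parts over $\Omega_1\setminus G$ is justified as written. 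The paper avoids both problems at once: (i) it replaces $\Omega_1\setminus G$ by the smooth regularized sets $\omega_r=\widetilde\Omega_{1,r}\setminus\widetilde V_r$ built from Lieberman's regularized distance, over which the divergence theorem is legitimate and whose complement has volume $O(r)$; and (ii) since $\lambda_i$ satisfies the \emph{homogeneous Neumann} condition $\partial_\nu\lambda_i=0$ on $I_i$ (the Robin conditions for $u_i$ and $\tilde u_i$ cancel in the ratio), the entire boundary integrand $u_1^2\lambda_1\,\partial_\nu\lambda_1$ is pointwise small on $\partial\omega_r$ --- of order $r$ on the part near $I_1$, and of order $r+|\nabla\lambda_1-\nabla\lambda_2|$ on the part near $I_2$ --- so no wrong-sign term ever needs to be absorbed. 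Your closing parenthetical (propagate $\lambda_1-\lambda_2$ for the uniformly parabolic equation \eqref{eq-lambda}, using the lower bound of Proposition \ref{lemma-lower}) is in fact the route you should take throughout, not merely an alternative for the propagation step; combined with the regularized domains it turns your sketch into the paper's proof.
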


\begin{proposition}[Improved stability estimates]
\label{cauchy-data2}
Let hypothesis of Proposition \ref{cauchy-data} be fulfilled.
In addition, assume there exist constants $L>0$ and $r_1$, $0<r_1<r_0$ such that
$\partial G$ is of Lipschitz class with constant $r_1$ and $L$.
Then there exists a positive constant $C$ depending on the a priori data only such that
\begin{equation}
\label{sta-lam2}
\int_{\Omega_i\setminus G} u_i^2(x,t)\lambda_i^2(x,t)dx\leq Cr_0^n\eta_2(\varepsilon)\qquad \forall\,t\in[t_1,T],
\end{equation}
where $\eta_2$ is an increasing continuous function on $[0,+\infty)$ which satisfies
$$\eta_2(s)\leq|\log s|^{-\beta_2},$$
for every $0<s<1$, with $\beta_2>0$.
\end{proposition}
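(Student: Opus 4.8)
The plan is to derive Proposition~\ref{cauchy-data2} from Proposition~\ref{cauchy-data} by upgrading the ambient smallness-propagation machinery from the ``raw'' $\Omega_i\setminus G$ geometry to one that exploits the extra hypothesis that $\partial G$ is Lipschitz with constants $r_1,L$. The point of Proposition~\ref{cauchy-data} is that, starting from the Cauchy data control \eqref{eps} on $\Sigma\times[t_1,T]$, one propagates smallness of $u_i^2\lambda_i^2$ from the accessible part $A$ into the interior of $G$ and then across $\partial G$ into $\Omega_i\setminus G$; the double-logarithmic rate $\eta_1(s)\le(\log|\log s|)^{-\beta_1}$ is the price paid when no lower regularity bound on $\partial G$ is available, so that the chain of two-sphere/one-cylinder inequalities must be iterated along a tube of balls whose radii shrink without a uniform lower bound. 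Once $\partial G$ is known to be Lipschitz with uniform constants, this chain can be replaced by a finite, a~priori bounded number of applications of the two-sphere one-cylinder inequality (and, crucially, of the \emph{boundary} version, Theorem~\ref{tre-teo}) along cones and tubes whose opening and radii are controlled by $r_1,L$; this yields the stronger single-logarithmic rate.

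Concretely, the steps I would carry out are: (1) Fix $t\in[t_1,T]$. From Proposition~\ref{cauchy-data} and the interior regularity bound \eqref{54}, together with $|u_i|\le C$ from \eqref{reg-lib}, one already has a single-logarithmic-in-$\varepsilon$ control of some surface or tube integral of $\lambda_i$ near $\partial G$ — more precisely, propagate from the Cauchy data to a quantitative smallness of $u_i\lambda_i$ on a collar of $G$ adjacent to $\Sigma$, of size comparable to $r_0$, with rate $|\log\varepsilon|^{-c}$; this is the ``first leg'' and it does not feel the geometry of $\Omega_i\setminus G$. (2) Use the Lipschitz character of $\partial G$ to chain this smallness along $\partial G$: cover $\partial G$ by a number $N$ of balls, with $N$ bounded in terms of $M,L,r_0/r_1$, and at each step apply the two-sphere one-cylinder inequality at the boundary (Theorem~\ref{tre-teo}); because the cone condition is uniform, each step costs only a fixed power, so after $N$ steps one still has a power $|\log\varepsilon|^{-\beta_2}$, with $\beta_2$ depending only on the a~priori data. (3) Transfer this boundary smallness of $\lambda_i$ into the bulk estimate \eqref{sta-lam2}: since $\lambda_i$ solves the variable-coefficient parabolic equation \eqref{eq-lambda} in $\Omega_i\setminus G$ with homogeneous conormal condition on $I_i$ (by the last line of \eqref{eq-lambda}, $u_i^2\partial_\nu\lambda_i=0$ on $I_i$), an energy estimate / Carleman-type bound with the data now being the trace of $\lambda_i$ on $\partial G$ gives $\int_{\Omega_i\setminus G}u_i^2\lambda_i^2\,dx\le Cr_0^n|\log\varepsilon|^{-\beta_2}$; alternatively iterate the two-sphere one-cylinder inequality inward from the collar, again a bounded number of times thanks to the uniform geometry. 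Finally, set $\eta_2(s)=|\log s|^{-\beta_2}$ for $0<s<1$ and extend it continuously and increasingly to $[0,+\infty)$, absorbing constants.

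The main obstacle I expect is Step~(2)--(3): one must control the \emph{number of iterations} of the (boundary) two-sphere one-cylinder inequality uniformly in terms of the a~priori data, because each application multiplies the exponent in the exponent, and an unbounded chain degrades the rate back to double-logarithmic (which is exactly what happens in Proposition~\ref{cauchy-data}). The Lipschitz hypothesis on $\partial G$ with \emph{uniform} constants $r_1,L$ is precisely what makes this count finite: it guarantees a uniform interior cone at every point of $\partial G$, hence the boundary sphere-cylinder inequality of Theorem~\ref{tre-teo} applies with fixed parameters, and $\partial G$ can be covered by $\le C(M,L,r_0/r_1)$ balls of radius comparable to $r_1$. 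A secondary technical point is that the coefficient $u_i^2$ in \eqref{eq-lambda} degenerates where $u_i$ is small; near $\partial G$, however, $G$ contains a collar adjacent to $A$ where the lower bound on $u_i$ coming from the Harnack-up-to-the-boundary argument (Proposition~\ref{harnack-lemma}) combined with the interior Harnack iteration keeps $u_i$ bounded below, so the equation for $\lambda_i$ is uniformly parabolic along the region where the chaining takes place; one must just be careful to quote these lower bounds with constants depending only on the a~priori data, and to handle the homogeneous conormal condition on $I_i\cap\partial(\Omega_i\setminus G)$ by even reflection or by the boundary sphere-cylinder inequality adapted to the conormal (Neumann-type) condition.
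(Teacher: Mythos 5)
Your overall architecture --- reduce the volume integral over $\Omega_i\setminus G$ to data on $\partial G$, then propagate smallness of the Cauchy data up to $\partial G$ using the uniform interior cone provided by the Lipschitz hypothesis --- is the right one, and your steps (1) and (3) correspond to what the paper does (integration by parts plus Gronwall as in Proposition \ref{cauchy-data}, then use of $\partial_\nu\lambda_1=0$ on $I_1$ and $\partial_\nu\lambda_2=0$ on $I_2$ to reduce everything to $\max_{\partial G}$ of $w=u_1-u_2$, $\tilde w=\tilde u_1-\tilde u_2$ and their gradients; your worry about the degeneracy of $u_i^2$ is moot since Proposition \ref{lemma-lower} gives $u_i\ge c_0\Phi_1$ on all of $\overline\Omega_i\times[t_1,T]$). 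The quantitative heart of your step (2), however, is wrong. You claim that $\partial G$ can be covered by an a priori bounded number $N$ of balls and that, since ``each step costs only a fixed power,'' after $N$ steps one still has a rate $|\log\varepsilon|^{-\beta_2}$. This is internally inconsistent: a chain of $N$ two-sphere one-cylinder inequalities with $N$ bounded and fixed radius ratios turns $\varepsilon$ into $\varepsilon^{\theta^N}$, a H\"older rate, not a logarithm --- and such a rate is not attainable here, because the smallness must be carried to points \emph{on} $\partial G$, where the admissible ball radius necessarily shrinks to zero and the number of iterations is genuinely unbounded. A second obstruction to your step (2) as stated: the boundary version of the inequality (Theorem \ref{tre-teo}) requires a homogeneous conormal condition on the relevant boundary portion, and $w$ satisfies no such condition on $\partial G\setminus A$ (on $\partial G\cap\partial\Omega_2$ only $u_2$ obeys a Robin condition), so one cannot chain the boundary inequality along $\partial G$; the chain must stay strictly inside $G$.

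What actually upgrades the double logarithm of Proposition \ref{cauchy-data} to a single logarithm is the \emph{geometry} of the chain, not its finiteness. At the point $\bar x\in\partial G$ where $\max_{\partial G}|\nabla w(\cdot,\bar t)|$ is attained, the Lipschitz condition gives an interior cone $C(\bar x,\xi,\theta,r_0)\subset G$ with $\theta=\arctan(1/L)$; one chains balls $B_{\rho_k}(x_k)$ down the cone axis with \emph{geometrically} decreasing radii $\rho_k=\chi\rho_{k-1}$, so that reaching distance $r$ from the vertex costs only $k(r)-1\sim|\log(r/d_1)|/|\log\chi|$ interior applications of the two-sphere one-cylinder inequality (versus the $\sim r^{-n}$ balls of radius $\sim r$ used in Proposition \ref{cauchy-data}, which is exactly the source of the log--log there). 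This yields $\|\nabla w(\cdot,\bar t)\|_{L^\infty(B_{\rho_{k(r)}}(x_{k(r)}))}\le Cr_0^{-1}\chi^{\beta_3(1-k(r))}\varepsilon^{\beta_2\bar\tau^{\,k(r)-1}}$ with $\bar\tau^{\,k(r)-1}$ comparable to a fixed positive power of $r$; one then pays an extra $(r/r_0)^\alpha$ from the $C^{1,\alpha}$ modulus of continuity of $\nabla w$ to pass from $x_{k(r)}$ to the vertex $\bar x$ itself, and the single-log rate comes from minimizing $\bigl(\frac{r}{r_0}\bigr)^{\alpha}+\chi^{\beta_3(1-k(r))}\varepsilon^{\beta_2\bar\tau^{\,k(r)-1}}$ over $r$. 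Without this balance between an unboundedly long (but only logarithmically long) chain and the H\"older error at the vertex, the claimed rate $\eta_2(\varepsilon)\le|\log\varepsilon|^{-\beta_2}$ does not follow.
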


\begin{proposition}[Lower bound on $u$]
\label{lemma-lower}
Let $u\in C^{1,\alpha}(\overline\Omega\times[0,T])$ be a solution to \eqref{eq-tot} with boundary data $g$
satisfying \eqref{phi0}, \eqref{phi00}, \eqref{phi001}, \eqref{039}.
Then there exists a positive constant $c_0$, $0<c_0<1$, depending on the a priori data
except  $\Phi_0,\Phi_1$ such that
\begin{equation}
\label{(1)-1}
u(x,t)\geq c_0\Phi_1, \qquad \textrm{for }t\geq t_1, x\in\Omega,
\end{equation}
where $t_1$ as in \eqref{039}.
\end{proposition}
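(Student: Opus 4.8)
The plan is to first establish a lower bound for $u$ in a fixed region adjacent to the ``good'' part $A^{2r_0}$ of the accessible boundary --- by comparison with a barrier that exploits the strictly positive heat flux \eqref{039} --- and then to propagate this bound to all of $\overline\Omega\times[t_1,T]$ by iterating the Harnack inequality. Throughout one uses that $u>0$ in $\Omega\times(0,T]$, shown at the beginning of Section \ref{sec3}, and that $u$ solves the constant coefficient heat equation, so that the interior Harnack constant is purely dimensional while the boundary Harnack constant depends only on $n,L,\overline\gamma$.

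\emph{Step 1 (local lower bound).} By \eqref{phi0001} we may fix a point $Q_0\in A^{2r_0}$ and, using the $C^{1,1}$ regularity of $\partial\Omega$, a reference domain $R=\Omega\cap B_{\rho_0}(Q_0)$ of size $\rho_0$ comparable to $r_0$, whose boundary splits into a portion $\Gamma_1\subset A^{2r_0}$, where $\partial u/\partial\nu=g\geq\Phi_1 r_0^{-1}$ by \eqref{039}, and the portion $\Gamma_2=\Omega\cap\partial B_{\rho_0}(Q_0)$, which lies in the interior of $\Omega$ and on which $u\geq0$. Let $w$ solve the heat equation in $R\times(0,t_1/2)$ with $w(\cdot,0)=0$, $\partial w/\partial\nu=\Phi_1 r_0^{-1}$ on $\Gamma_1$ and $w=0$ on $\Gamma_2$. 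By the maximum principle and Hopf's lemma $w\geq0$ and $w$ is nondecreasing in $t$; moreover $u-w$ vanishes at $t=0$, is non-negative on $\Gamma_2$, and satisfies $\partial(u-w)/\partial\nu=g-\Phi_1 r_0^{-1}\geq0$ on $\Gamma_1$, whence $u\geq w$ on $R\times(0,t_1/2)$. Since $r_0^2/t_1$ is among the a priori data, $t_1/2$ exceeds a fixed multiple of $r_0^2\sim\rho_0^2$, so a standard barrier estimate for the heat equation gives $w(x,t_1/2)\geq c_1\Phi_1$ on a ball $B_\delta(\bar x)\subset R$ with $\delta\sim r_0$ and $\bar x$ at distance $\sim r_0$ from $\partial\Omega$; here $c_1\in(0,1)$ depends only on $n,L,r_0^2/t_1$, and in particular not on $\Phi_1$ because $w$ carries the normalising factor $\Phi_1 r_0^{-1}$. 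Monotonicity of $w$ in $t$ then yields
$$u(x,t)\geq c_1\Phi_1,\qquad x\in B_\delta(\bar x),\quad t\geq t_1/2.$$

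\emph{Step 2 (propagation).} Since $\Omega$ is connected with $|\Omega|\leq Mr_0^n$, every point of $\Omega_{\rho_0}$ can be joined to $\bar x$ by a chain of at most $N=N(M,L,n)$ balls of radius $\sim r_0$ staying at a fixed distance from $\partial\Omega$. Iterating the interior Harnack inequality along such a chain, with time increments compatible with the a priori bounds on $r_0^2/t_1$ and $r_0^2/T$ so that the chain starts at a time $\geq t_1/2$ and ends at a prescribed $t\in[t_1,T]$, we obtain $u(\cdot,t)\geq c_2\Phi_1$ on $\Omega_{\rho_0}\times[t_1,T]$. Finally, the points of $\Omega$ within distance $\rho_0$ of $\partial\Omega$ are reached by one additional boundary Harnack step: near a boundary point of $I$ one uses the Harnack inequality up to the inaccessible boundary of Proposition \ref{harnack-lemma} (constants also depending on $\overline\gamma$), and near a boundary point of $A$ one uses a boundary Harnack inequality for the mixed condition with non-negative flux $g\geq0$ (e.g.\ via even reflection, after which $u$ becomes a supersolution and the weak Harnack inequality applies). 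This gives $u(x,t)\geq c_0\Phi_1$ for all $x\in\Omega$, $t\in[t_1,T]$, with $c_0\in(0,1)$ depending only on $M,L,n,\overline\gamma,r_0^2/t_1,r_0^2/T$, i.e.\ on the a priori data except $\Phi_0$ and $\Phi_1$.

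\emph{Main obstacle.} The two delicate points are keeping the constant in the barrier estimate of Step 1 independent of $\Phi_1$ --- which forces the normalisation of $w$ by $\Phi_1 r_0^{-1}$ and the placement of the barrier against a boundary portion genuinely contained in $A^{2r_0}$ --- and the bookkeeping in the chaining of Step 2, where the number of Harnack steps must remain bounded by the a priori data while the time increments are squeezed into the interval $[t_1/2,T]$. The latter is exactly why the Harnack inequality up to the inaccessible boundary is needed: a chain of purely interior balls reaching arbitrarily close to $\partial\Omega$ would have unbounded length.
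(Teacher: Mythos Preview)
Your argument is correct and follows a genuinely different path from the paper's. The paper locates the minimum point $(x_0,t_0)$ of $u$ on $\overline\Omega\times[t_1,T]$ and does a case analysis: it rules out $x_0\in\bar A$ by Hopf's lemma (since $\partial_\nu u=g\geq0$ there), and when $x_0\in\bar I$ or $t_0=t_1$ it applies one boundary Harnack step (Proposition~\ref{harnack-lemma}) at $x_0$ followed by a chain of interior Harnack inequalities \emph{backward in time} to a fixed interior point $y_1$ at distance $\sim r_0$ from some $y_0\in A^{2r_0}$; the value $u(y_1,\cdot)$ is then bounded below by a first-order Taylor expansion at $y_0$, using only $u>0$, $\partial_\nu u=g\geq\Phi_1 r_0^{-1}$ and the $C^{1,\alpha}$ bound~\eqref{reg-lib}, with the radius $\rho$ chosen so that the H\"older remainder is absorbed. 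By contrast, you obtain the local lower bound near $A^{2r_0}$ via a comparison barrier $w$ rather than a Taylor expansion, which has the advantage of delivering $u\geq c_1\Phi_1$ on a fixed ball for \emph{every} $t\geq t_1/2$ at once (just extend $w$ to $R\times(0,T)$; the comparison and the monotonicity in $t$ persist), and hence decouples the starting time of the Harnack chain from the target time; the chain then runs forward rather than backward. The paper's pointwise Taylor step is slightly more elementary in that no auxiliary boundary-value problem is solved, while your barrier does not appeal to the pointwise $C^{1,\alpha}$ regularity of $u$ beyond what Harnack already needs.
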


\begin{proposition}[Lower bound for $\lambda$]
\label{stimadalbasso}
For every $\rho>0$ and for every $x_0\in \Omega_{\rho}$, we have for $i=1,2$,
\begin{eqnarray}
\label{0.18}
\int_{t_1}^{T}\int_{B_{\rho}(x_0)}\lambda_i^2(x,t) dxdt \geq C_\rho r_0^{n+2}\Phi_0,
\end{eqnarray}
where $C_\rho>0$ is a constant depending on the a priori data and $\rho$ only.
\end{proposition}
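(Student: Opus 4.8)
The plan is to exploit the structure of the equation \eqref{eq-lambda} for $\lambda_i$, for which the relevant quantity to control from below is the spatial Dirichlet energy. First I would multiply the equation $\partial_t\lambda_i = \mathrm{div}(u_i^2\nabla\lambda_i)$ by $\lambda_i$ and integrate over $\Omega_i$; using the homogeneous Neumann condition on $I_i$ and the Neumann data on $A$ from \eqref{eq-lambda}, together with the initial condition $\lambda_i(\cdot,t_1)=0$, I obtain for every $t\in[t_1,T]$ the energy identity
\begin{equation*}
\frac12\int_{\Omega_i}\lambda_i^2(x,t)\,dx + \int_{t_1}^{t}\int_{\Omega_i} u_i^2|\nabla\lambda_i|^2\,dx\,ds
= \int_{t_1}^{t}\int_{A}\bigl(u_i\tilde g - \tilde u_i g\bigr)\lambda_i\,d\sigma\,ds .
\end{equation*}
Rewriting the right-hand integrand as $u_i g\bigl(\tilde g/g - \tilde u_i/u_i\bigr)\lambda_i = -u_i g\,\lambda_i\bigl(\lambda_i - (\tilde g/g - 1)\bigr)$ and using that on $A$ the trace of $\lambda_i$ is close to the prescribed flux ratio, I want to show that the boundary term is bounded below, up to controllable errors, by a positive multiple of $\|\tilde g/g - (\tilde g/g)_{A\times[0,T]}\|_{L^2}^2 \geq \Phi_0^2$, invoking \eqref{phia}. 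The point is that a Poincaré–type inequality on $A\times[t_1,T]$ controls the mean-free part, and the constant mode of $\lambda_i$ contributes nothing once integrated against the flux data (or can be absorbed).

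Next, I would combine this lower bound on the time-integrated Dirichlet energy $\int_{t_1}^{T}\int_{\Omega_i} u_i^2|\nabla\lambda_i|^2$ with the lower bound $u_i\geq c_0\Phi_1$ from Proposition \ref{lemma-lower}, which gives $\int_{t_1}^{T}\int_{\Omega_i}|\nabla\lambda_i|^2\geq c\,\Phi_0$ for a constant $c$ depending on the a priori data. To pass from a lower bound on the full gradient energy to the lower bound \eqref{0.18} on the $L^2$ norm of $\lambda_i$ over a single ball $B_\rho(x_0)$ with $x_0\in\Omega_\rho$, I would argue by contradiction and unique continuation: if $\int_{t_1}^{T}\int_{B_\rho(x_0)}\lambda_i^2$ were too small, then by the quantitative unique continuation estimates available for parabolic equations (the two-sphere one-cylinder inequality referenced in the introduction, \cite{Es-Fe-Ve, Ve2}), propagating smallness from the cylinder $B_\rho(x_0)\times(t_1,T)$ through chains of balls covering $\Omega_i$ up to the accessible boundary, $\lambda_i$ together with its gradient would be small throughout $\Omega_i\times(t_1',T)$ for some $t_1'$, contradicting the energy lower bound just established. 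The uniform $C^{1,\alpha}$ bound \eqref{54} on $\lambda_i$ provides the Lipschitz control needed to convert an $L^2$-smallness of $\lambda_i$ into smallness of the boundary flux term, closing the contradiction.

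I expect the main obstacle to be the unique continuation step: one must propagate smallness quantitatively from an arbitrary interior cylinder to a fixed region where the energy lower bound lives, with constants depending only on the a priori data and $\rho$, and this requires iterating the two-sphere one-cylinder inequality along a Harnack-type chain whose length and geometry are controlled by \eqref{Omega1} and \eqref{Omega3}. A secondary technical point is the handling of the constant mode of $\lambda_i$ and the careful bookkeeping in the boundary term, since $\lambda_i$ does not have zero mean and \eqref{phib} is what guarantees the initial condition $\lambda_i(\cdot,t_1)=0$ that makes the energy identity clean; one must also verify that the coefficient $u_i^2$, while merely $C^{0,\alpha}$ and bounded between $c_0^2\Phi_1^2$ and a constant, is admissible for the quantitative unique continuation results being quoted.
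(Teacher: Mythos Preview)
Your second step (propagating smallness via the two--sphere one--cylinder inequality from $B_\rho(x_0)$ to the accessible boundary) matches the paper. The gap is in your first step.

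You write the energy identity
\[
\tfrac12\int_{\Omega_i}\lambda_i^2(x,t)\,dx+\int_{t_1}^{t}\!\!\int_{\Omega_i}u_i^2|\nabla\lambda_i|^2
=\int_{t_1}^{t}\!\!\int_{A}h\,\lambda_i\,d\sigma\,ds,
\qquad h=u_i\tilde g-\tilde u_i g=u_ig\bigl(f-\lambda_i\bigr),\ f=\tfrac{\tilde g}{g}-1,
\]
and then assert that the right-hand side is bounded below by a positive multiple of $\Phi_0^2$. But the boundary integrand is $u_ig(f-\lambda_i)\lambda_i$, which involves $\lambda_i$ itself; there is no mechanism that forces this to be large. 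If the trace of $\lambda_i$ on $A$ is close to $f$, the factor $f-\lambda_i$ is small; if the trace is small, the factor $\lambda_i$ is small. Neither a Poincar\'e inequality nor removing the constant mode produces a genuine lower bound here, and the sentence ``on $A$ the trace of $\lambda_i$ is close to the prescribed flux ratio'' is exactly what you are trying to prove, so the argument is circular.

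The paper uses the energy identity in the \emph{opposite} direction. From the same identity, together with a trace inequality and Gronwall, one obtains the a priori \emph{upper} bound (this is Lemma~\ref{tech})
\[
\int_{\Omega_i}\lambda_i^2\,dx+\int_{t_1}^{t}\!\!\int_{\Omega_i}|\nabla\lambda_i|^2
\;\le\;C\int_{t_1}^{t}\!\!\int_{A}|h|^2\,d\sigma\,ds,
\]
so that, via the trace inequality again, $\|\lambda_i\|_{L^2(\partial\Omega_i\times[t_1,T])}\le C\|h\|_{L^2(A\times[t_1,T])}$. The lower bound involving $\Phi_0$ then comes from the pointwise identity on $A$,
\[
f=\frac{u_i}{g}\,\partial_\nu\lambda_i+\lambda_i,
\]
which immediately gives $\Phi_0\le\|f-c\|_{L^2(A\times[t_1,T])}\le C\|h\|_{L^2(A\times[t_1,T])}$ for any constant $c$. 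Only after this does unique continuation enter, to show $\|h\|_{L^2(A\times[t_1,T])}^2\le C_\rho\int_{t_1}^{T}\!\int_{B_\rho(x_0)}\lambda_i^2$, which is essentially your second step. In short: keep your unique continuation argument, but replace the attempted lower bound on the boundary term by the upper estimate of Lemma~\ref{tech} combined with the boundary identity for $f$.
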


To better deal with the Hausdorff distance, we introduce a variation of it that, though it
is not a metric, we shall call modified distance (see also \cite{Al-Be-Ro-Ve,DC-Ro-Ve}).
\begin{definition}
We call modified distance between $\Omega _{1}$ and $\Omega _{2}$ the
number
\begin{equation}
\label{731}
d_{m}( \Omega _{1},\Omega _{2}) =\max \left\{
\sup_{x\in
\partial \Omega _{1}}\mathrm{dist}( x,\overline{\Omega }_{2}),
\sup_{x\in \partial \Omega _{2}}\mathrm{dist}(x,\overline{\Omega }_{1}) \right\}.
\end{equation}
\end{definition}

Note that
\begin{equation}
\label{741}
d_{m}( \Omega _{1},\Omega _{2}) \leq d_{\mathcal{H}}(\overline{\Omega }_{1},\overline{\Omega }_{2}),
\end{equation}
but, in general, the reverse inequality does not hold.
However we have the following result, \cite{Al-Be-Ro-Ve}

\begin{proposition}[Proposition 3.6 \cite{Al-Be-Ro-Ve}]
\label{moddistprop}
Let $\Omega _{1}$ and $\Omega _{2}$ be bounded domains satisfying \eqref{Omega}.
There exist numbers $d_{0}>0$, $\tilde r\in (0,r_{0}]$, such that
$\frac{d_{0}}{r_{0}}$ and $\frac{\tilde r}{r_{0}}$ depend on $E$ only
and the following facts hold true. If
\begin{equation}
\label{751}
d_{\mathcal{H}}( \overline{\Omega }_{1},\overline{\Omega}_{2}) \leq d_{0},
\end{equation}
then there exists an absolute constant $C>0$ such that
\begin{equation}
\label{761}
d_{\mathcal{H}}( \overline{\Omega }_{1},\overline{\Omega }_{2})
\leq Cd_{m}( \Omega _{1},\Omega _{2}),
\end{equation}
and any
connected component of $\Omega _{1}\cap \Omega_{2}$ has boundary
of Lipschitz class with constants $\tilde r$, $L_1$ where $\tilde r$ is as
above and $L_1>0$ depends on $L$ only.
\end{proposition}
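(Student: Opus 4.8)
The plan is purely geometric and rests on the uniform two-sided ball condition that the $C^{1,1}$ regularity \eqref{Omega3} provides: there is $\rho_{0}>0$, with $\rho_{0}/r_{0}$ depending on $L$ only, such that for every $z\in\partial\Omega_{i}$ one has $B_{\rho_{0}}(z+\rho_{0}\nu_{i}(z))\subset\mathbb R^{n}\setminus\overline\Omega_{i}$ and $B_{\rho_{0}}(z-\rho_{0}\nu_{i}(z))\subset\Omega_{i}$, $\nu_{i}$ denoting the outer unit normal to $\partial\Omega_{i}$. Inequality \eqref{741} is immediate since $\partial\Omega_{i}\subset\overline\Omega_{i}$, so the content is \eqref{761} and the Lipschitz regularity of the overlap; throughout I would take $d_{0}=\rho_{0}/4$, so that $d_{0}/r_{0}$ depends on $L$ only.

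For \eqref{761} I assume \eqref{751} and choose a point realizing the Hausdorff distance, say $y\in\overline\Omega_{1}$ with $\mathrm{dist}(y,\overline\Omega_{2})=d_{\mathcal H}$ (the opposite case is symmetric, and $d_{\mathcal H}=0$ is trivial). Since $0<d_{\mathcal H}\le d_{0}<\rho_{0}$, the point $y$ lies within the reach of $\partial\Omega_{2}$, so its nearest point $z\in\overline\Omega_{2}$ is a unique point of $\partial\Omega_{2}$ with $y=z+d_{\mathcal H}\nu_{2}(z)$, and the exterior ball at $z$ shields the segment, giving $\mathrm{dist}(z+t\nu_{2}(z),\overline\Omega_{2})=t$ for every $t\in[0,\rho_{0}]$. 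Were $y$ interior to $\Omega_{1}$, then for $t$ slightly above $d_{\mathcal H}$ the point $z+t\nu_{2}(z)$ would still belong to $\Omega_{1}$ while being farther from $\overline\Omega_{2}$, contradicting the maximality of $y$; hence $y\in\partial\Omega_{1}$ and $d_{\mathcal H}=\mathrm{dist}(y,\overline\Omega_{2})\le d_{m}$. In this $C^{1,1}$ framework this already gives $C=1$; if instead only a Lipschitz cone condition were available one would chain along the axis of an exterior cone and obtain \eqref{761} with a constant depending on the aperture, hence on $L$, which is the form stated.

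For the regularity of the overlap, fix $\tilde r\le\rho_{0}/4$ small enough (with $\tilde r/r_{0}$ depending on $L$), let $W$ be a connected component of $\Omega_{1}\cap\Omega_{2}$ and let $P\in\partial W$. Since $W$ is open and closed in $\Omega_{1}\cap\Omega_{2}$, one has $\partial W\subset\partial(\Omega_{1}\cap\Omega_{2})\subset\partial\Omega_{1}\cup\partial\Omega_{2}$. If $B_{\tilde r}(P)$ meets only one of the two boundaries, say $\partial\Omega_{1}$, then necessarily $B_{\tilde r}(P)\subset\Omega_{2}$ and $W\cap B_{\tilde r}(P)=\Omega_{1}\cap B_{\tilde r}(P)$, a single $C^{1,1}$ graph region. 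Otherwise $P\in\partial\Omega_{1}\cap\partial\Omega_{2}$, and the crucial point is that the angle $\beta$ between $\nu_{1}(P)$ and $\nu_{2}(P)$ is bounded away from $\pi$ by a quantity depending on $L$: if $\beta$ were close to $\pi$, the wedge $\Omega_{1}\cap(\mathbb R^{n}\setminus\overline\Omega_{2})$ at $P$ would have opening close to $\pi$, hence would contain a point of $\Omega_{1}$ at distance of order $r_{0}/(1+L)$ from $\overline\Omega_{2}$, forcing $d_{\mathcal H}$ to be of that same order and contradicting $d_{\mathcal H}\le d_{0}$ once $d_{0}$ is chosen small. Granted this, take coordinates with the $x_{n}$-axis along the bisector of the wedge $\Omega_{1}\cap\Omega_{2}$ at $P$; both $\partial\Omega_{i}$ are then $C^{1,1}$ graphs $x_{n}=\varphi_{i}(x')$ in $B_{\tilde r}(P)$ with $\Omega_{i}\cap B_{\tilde r}(P)=\{x_{n}>\varphi_{i}(x')\}$ and with Lipschitz constant bounded by a constant $L_{1}$ depending on $L$ only (the bound being governed by $\beta$ staying away from $\pi$ and by the $C^{1,1}$ norms). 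A point of $W$ near $P$ must lie above both graphs, so $W\cap B_{\tilde r}(P)=\{x_{n}>\max(\varphi_{1}(x'),\varphi_{2}(x'))\}\cap B_{\tilde r}(P)$; since the maximum of two functions with Lipschitz constant at most $L_{1}$ is again Lipschitz with constant at most $L_{1}$, $\partial W$ is of Lipschitz class near $P$ with constants $\tilde r$ and $L_{1}$.

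The main obstacle is this last step: one must certify that inside a uniform chart $B_{\tilde r}(P)$ the component $W$ is exactly the region above the maximum of two controlled graphs, which means ruling out a near-degenerate crossing of $\partial\Omega_{1}$ and $\partial\Omega_{2}$ (normals nearly opposite) as well as the re-entry of a second sheet of either boundary into $B_{\tilde r}(P)$. Each such configuration is excluded by the same mechanism: it would produce a point of one of the two domains lying at distance of order $r_{0}$ from the other, which \eqref{751} forbids; this is precisely what fixes the admissible sizes of $d_{0}$ and $\tilde r$ in terms of $r_{0}$ and $L$.
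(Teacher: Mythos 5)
The paper does not prove this proposition at all: it is quoted verbatim from \cite{Al-Be-Ro-Ve} (Proposition 3.6 there), so there is no internal proof to compare against. Your argument is a sound reconstruction of the one in \cite{Al-Be-Ro-Ve}, with one simplification that the present setting permits: since \eqref{Omega3} gives $C^{1,1}$ rather than merely Lipschitz boundaries, you can use the uniform two-sided ball condition instead of the interior/exterior cone condition, and this makes both halves cleaner. Pushing along the normal from the foot point $z$ shows the maximizer of $\mathrm{dist}(\cdot,\overline{\Omega}_2)$ over $\overline{\Omega}_1$ must sit on $\partial\Omega_1$, which even yields \eqref{761} with $C=1$ (in \cite{Al-Be-Ro-Ve} the cone-chaining argument only gives a larger absolute constant); and the quantitative version of your angle bound is exactly the ball computation: the interior ball of $\Omega_1$ at $P$ and the exterior ball of $\Omega_2$ at the nearby point $Q\in\partial\Omega_2$ have centers at distance $|P-Q|+\rho_0|\nu_1(P)+\nu_2(Q)|$ apart at most, their intersection contains a ball of radius $\rho_0-\tfrac{1}{2}(\text{that distance})$ lying in $\Omega_1\setminus\overline{\Omega}_2$, and \eqref{751} with $d_0=\rho_0/4$ then forces $|\nu_1(P)+\nu_2(Q)|$ bounded below, i.e.\ the angle bounded away from $\pi$ by a constant depending on $L$ only. (Incidentally, the statement's assertion that $d_0/r_0$ and $\tilde r/r_0$ depend on $E$ is surely a misprint for $L$; your proof correctly produces dependence on $L$.) Two places in your sketch deserve the extra care you yourself flag at the end: first, a point $P\in\partial W$ need not lie on both boundaries even when $B_{\tilde r}(P)$ meets both, so the graph for $\partial\Omega_2$ must be taken in the chart centred at the nearest point $Q\in\partial\Omega_2$ and then rotated to the common direction, the angle estimate being applied to $\nu_1(P)$ and $\nu_2(Q)$; second, identifying $W\cap B_{\tilde r}(P)$ with all of $(\Omega_1\cap\Omega_2)\cap B_{\tilde r}(P)$ requires checking that the region above $\max(\varphi_1,\varphi_2)$ in $B_{\tilde r}(P)$ is connected, which holds after shrinking $\tilde r$ by a factor depending on $L_1$. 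Neither point is an obstruction, and with them supplied the proof is complete.
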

Last tool we need for the proof of Theorem \ref{main-theor} is related with quantitative form
of unique continuation property.

\begin{theorem}[two--sphere one--cylinder inequality at the boundary]
\label{tre-teo}
Let $\lambda,\Lambda$ and $R$ be positive numbers, with $\lambda\in(0,1]$ and $t_0\in\mathbb R$.
Let $L$ be a parabolic operator $L=\partial_i(g^{ij}(x,t)\partial_j)-\partial_t$,
where $\{g^{ij}(x,t)\}_{i,j=1}^n$ is a real symmetric $n\times n$ matrix.
For $\xi\in\mathbb R^n$, $(x,t),(y,\tau)\in\mathbb R^{n+1}$, assume that
$$\lambda|\xi|^2\leq\sum_{i,j=1}^n g^{ij}(x,t)\xi_i\xi_j\leq\lambda^{-1}|\xi|^2$$
and
$$\left(\sum_{i,j=1}^n(g^{ij}(x,t)-g^{ij}(y,\tau))^2\right)^{1/2}\leq\frac{\Lambda}{R}
\left(|x-y|^2+|t-\tau|\right)^{1/2}.$$
Let $u\in H^{2,1}(Q_{R,\varphi}^{t_0})$ be such that
$$|Lu|\leq\Lambda\left(\frac{|\nabla u|}{R}+\frac{|u|}{R^2}\right),\qquad\textrm{in }Q_{R,\varphi}^{t_0}$$
and
$$g^{ij}\frac{\partial u}{\partial x_j}(x,t)\nu_i=0,\qquad\forall\,(x,t)\in\Gamma_{R,\varphi}^{t_0}.$$
There exist constants $s_1\in(0,1)$ and $C$, $C>0$, depending on $\lambda,\Lambda$ and $L$ only
such that for every $r$, $0<r\leq\rho\leq s_1R$ we have
\begin{equation}
\label{tre}
\int_{B_\rho\cap\Omega}u^2(x,t_0)dx\leq
\frac{CR^2}{\rho^2}\left(R^{-2}\int_{Q_{R,\varphi}^{t_0}}u^2dxdt\right)^{1-\theta}
\left(\int_{B_r\cap\Omega}u^2(x,t_0)dx\right)^\theta,
\end{equation}
where $\theta=\frac{1}{C\log\frac{R}{r}}$.
\end{theorem}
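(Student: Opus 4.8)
The statement to prove is a quantitative unique continuation estimate at the boundary for parabolic operators with a homogeneous conormal (Neumann-type) condition on the flat portion $\Gamma_{R,\varphi}^{t_0}$. I want to think through how one would establish it.

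The natural approach is reflection followed by the interior version of the two-sphere one-cylinder inequality. Since $u$ satisfies the conormal condition $g^{ij}\partial_j u\,\nu_i = 0$ on the flat piece $x_n = \varphi(x')$, after flattening the boundary (a bi-Lipschitz change of variables straightening $x_n = \varphi(x')$ to $x_n = 0$, which preserves ellipticity bounds and Lipschitz dependence of the coefficients at the cost of enlarging $\Lambda$), one performs an even reflection across $\{x_n = 0\}$: define $\tilde u(x',x_n,t) = u(x',x_n,t)$ for $x_n > 0$ and $\tilde u(x',x_n,t) = u(x',-x_n,t)$ for $x_n < 0$, and reflect the coefficients $g^{ij}$ accordingly (evenly for $i,j$ both $\le n-1$ or both $=n$, oddly for mixed terms). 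The homogeneous conormal condition is exactly what guarantees that $\tilde u$ is a weak solution of the reflected equation $\tilde L \tilde u = F$ across the interface, with $|\tilde L\tilde u|\le \Lambda(|\nabla\tilde u|/R + |\tilde u|/R^2)$ still holding and the reflected coefficient matrix retaining the ellipticity constant $\lambda$ and a Lipschitz-in-$(x,t)$ modulus controlled by $\Lambda/R$. This is the step where the flatness of $\Gamma$ and the conormal (rather than Dirichlet) condition are essential.

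Once the problem is extended to a full neighborhood $Q_R$ (a genuine parabolic cylinder, no boundary), I invoke the interior two-sphere one-cylinder inequality — this is the result of \cite{Es-Fe-Ve, Ve2} referenced in the introduction, which for a parabolic operator with Lipschitz coefficients gives, for $0 < r \le \rho \le s_1 R$,
\begin{equation*}
\int_{B_\rho}\tilde u^2(x,t_0)\,dx \le \frac{CR^2}{\rho^2}\left(R^{-2}\int_{Q_R^{t_0}}\tilde u^2\,dxdt\right)^{1-\theta}\left(\int_{B_r}\tilde u^2(x,t_0)\,dx\right)^\theta,
\end{equation*}
with $\theta = (C\log(R/r))^{-1}$. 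Restricting the balls back to the half-space and undoing the flattening change of variables, the left side controls $\int_{B_\rho\cap\Omega}u^2(x,t_0)$, the cylinder integral on the right is comparable (up to the factor $2$ absorbed into $C$) to $\int_{Q_{R,\varphi}^{t_0}}u^2$, and the small-ball term is comparable to $\int_{B_r\cap\Omega}u^2(x,t_0)$. Tracking the bi-Lipschitz Jacobians only changes the constants $C$ and $s_1$ in a way that depends on $\lambda,\Lambda$, which is permitted by the statement. This yields \eqref{tre}.

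The main obstacle is the reflection/extension step: one must verify carefully that the reflected function is a bona fide weak solution of a parabolic equation of the same structural type \emph{across} the interface — i.e., that no singular distributional term is created on $\{x_n=0\}$ — and that the reflected coefficients remain uniformly elliptic with the same Lipschitz modulus, even though $\varphi$ is only $C^{1,1}$ so the flattening map is merely Lipschitz with Lipschitz derivative. In fact, since the theorem as stated already assumes the coefficients are Lipschitz in $(x,t)$ and $\Gamma$ is flat, the cleanest route is to assume $\varphi \equiv 0$ outright (the flattening having been done once and for all, as the $Q_{R,\varphi}^{t_0}$ notation with general $\varphi$ only appears to match the application), reflect directly, and then quote the interior inequality; the bookkeeping of the conormal condition ensuring the weak formulation passes to $\tilde u$ is then the one genuinely technical point, and it is classical.
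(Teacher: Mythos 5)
Your overall strategy (flatten, reflect evenly across the interface, then quote the interior two--sphere one--cylinder inequality) is a legitimate route, and it is genuinely different from what the paper does: the paper gives no self-contained argument but defers to \cite[Theorem 3.3.5]{Ve2} and \cite{Es-Fe-Ve}, where the boundary estimate is obtained from Carleman estimates constructed directly in the half-cylinder geometry, with the boundary condition entering through the sign of the boundary terms rather than through an extension of the solution.

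However, as written your argument has a concrete gap at precisely the step you flag as ``classical'': the regularity of the reflected coefficients. Write $J=\mathrm{diag}(1,\dots,1,-1)$. Your even reflection of $u$ together with the even/odd reflection of the entries of $g$ amounts to setting $\tilde g(x',x_n)=Jg(x',-x_n)J^{T}$ for $x_n<0$; this preserves ellipticity (since $J$ is orthogonal) and, thanks to the conormal condition, the normal flux is continuous across $\{x_n=0\}$, so $\tilde u$ is indeed a weak solution with no singular interface term. But the mixed entries satisfy $\tilde g^{in}(x',0^-)=-g^{in}(x',0^+)$ for $i<n$, so $\tilde g$ has a jump discontinuity across the interface unless $g^{in}(x',0)=0$. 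After flattening a graph $x_n=\varphi(x')$ by $\Psi(y)=(y',\varphi(y')+y_n)$ the transformed matrix has mixed entries proportional to $\partial_i\varphi$, which do not vanish on the interface; and even for $\varphi\equiv 0$ the theorem allows a general symmetric $g^{ij}$ with $g^{in}\neq 0$ there (the boundary condition is then oblique, $\sum_j g^{nj}\partial_j u=0$, not $\partial_n u=0$). So the reflected operator has merely bounded measurable leading coefficients, and the interior two--sphere one--cylinder inequality cannot be applied: quantitative unique continuation of this strength genuinely requires (at least) Lipschitz coefficients and fails for $L^\infty$ ones. The missing ingredient is a further change of variables, adapted to the operator and not just to the geometry (e.g.\ coordinates built from the flow of the conormal vector field $\sum_i g^{in}\partial_i$, or normal coordinates in the isotropic case relevant to this paper), which achieves $g^{in}=0$ on the interface before reflecting; only then are the reflected coefficients Lipschitz with constants controlled by $\lambda,\Lambda$. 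That reduction is the real technical content hidden in the phrase ``slight modifications'' and must be supplied for your proof to close.
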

\begin{proof}
The proof can be obtained along the line of \cite[Theorem 3.3.5]{Ve2} through slight modifications
due to the different boundary condition we have on $\Gamma_{R,\varphi}^{t_0}$ (see also \cite{Es-Fe-Ve}
where a similar problem is studied).
\end{proof}
An inequality similar to \eqref{tre} can be obtained for cylinder and spheres entirely contained
in the domain $\Omega$. We refer the interested reader to \cite[Theorem 3.3.3]{Ve2}.

\begin{proof}[Proof of Theorem \ref{main-theor}]
For the sake of brevity we denote $d=d_\mathcal H(\overline\Omega_1,\overline\Omega_2)$ and
$d_m=d_m(\overline\Omega_1,\overline\Omega_2)$.
The proof follows the lines of the proof of \cite[Theorem 4.1]{Ca-Ro-Ve}.
We shall sketch only the main items, using
Propositions \ref{cauchy-data}, \ref{cauchy-data2}, \ref{lemma-lower}, \ref{stimadalbasso}.
Let us prove that if $\eta>0$ is such that
\begin{equation}
\int_{t_{1}}^{T}\int_{\Omega _{1}\backslash G}u_1^2(x,t)\lambda _{1}^{2}(x,t)dxdt\leq\eta,
\qquad\forall t\in \lbrack t_{1},T],
\label{115}
\end{equation}
then there exists a constant $C$, depending on the a priori data only,
such that
\begin{equation}
d_{m}\leq C\eta^{K},
\label{120}
\end{equation}
where $K$ depends on the a priori data only.
We may assume, without loss of generality, that there exists $x_0\in I_1\subset\partial\Omega_1$ such that
$$\mathrm{dist}(x_0,\overline\Omega_2)=d_m.$$
We apply now the two--sphere one-cylinder inequality at the boundary \cite[Theorem 3.3.5]{Ve2} with $r=d_m$,
$\rho=cr_0$ and $R=r_0$, we integrate over the time interval $[0,T]$ and we get
\begin{eqnarray}
\label{1-1}
\quad\int_0^{T}\int_{Q_{\rho,\varphi}(t)}\lambda_1^2(x,t)dxdt
&\leq&\frac{CR^2}{\rho^2}
\left(R^{-2}\int_{Q_{R,\varphi}^t}\lambda_1^2(x,t)dxdt\right)^{1-\theta}\\[2mm]
&&\times\left(\int_0^T\int_{{Q_{r,\varphi}}(t)}\lambda_1^2(x,t)dxdt\right)^{\theta},\nonumber
\end{eqnarray}
where $\theta=\frac{1}{C\log\left(\frac{R}{r}\right)}$.
Recalling \eqref{sta-lam2} and \eqref{0.18}, we get the following inequality
\begin{equation}
\label{1-2}
\Phi_0\leq CA^{1-\theta}\eta^\theta,
\qquad\textrm{where }
A=\int_{\Omega_1\times[0,T]}u_1^2(x,t)dxdt.
\end{equation}
Developing \eqref{1-2} we arrive to
$$\rho_1\leq C r_0\left(\frac{\eta}{A}\right)^{\left|\log\frac{a}{A}\right|},$$
which leads to
\begin{equation}
\label{1-3}
d_m\leq C_1\eta^{C_0},
\end{equation}
where $C_0=|\log \Phi_0/A|$ and $C_1$ is a positive constant depending on the a priori data only.
Let us consider now the Hausdorff distance $d$.
With no loss of generality, we may assume there exists $y_0\in\overline\Omega_1$ such that
$\mathrm{dist}(y_0,\overline\Omega_2)=d$. Denote by $\delta=\mathrm{dist}(y_0,\partial\Omega_1)$.
We distinguish three cases.

i) $\delta\leq d/2$.

\noindent We take $z_0\in\partial\Omega_1$ such that $|y_0-z_0|=\delta$ and we have
$$d_m\geq \mathrm{dist}(z_0,\overline\Omega_2)\geq d-\delta\geq\delta/2,$$
hence $\delta\leq 2d_m$, that is \eqref{1-3} holds for $d$ as well.

ii) $d/2<\delta\leq d_0/2$.

\noindent This implies $d<d_0$ and by Proposition \ref{moddistprop} we obtain \eqref{1-3} for $d$.

iii) $\delta>\max\left\{d/2,d_0/2\right\}$.

\noindent We observe that if $d_0/2<d/2$ then \eqref{1-3} for $d$ is trivial.
On the other hand $d/2<d_0/2$ implies $\delta\geq d/2$.
Let us denote $d_1=\min\left\{\frac{d}{2},\frac{s_1r_0}{2}\right\}$,
where $s_1\in(0,1)$ has been introduced in Theorem \ref{tre-teo} and it depends on the
a priori data only.
We have $B_{d_1}(y_0)\subset\Omega_1\setminus\Omega_2$ and $B_{s_1r_0/2}(y_0)\subset\Omega_1$
because $\delta>\max\{d/2,d_0/2\}\geq d_0/2>s_1r_0/2$.
Applying again Theorem \ref{tre-teo} with $\rho_1=d_1$, $\rho_2=s_1r_0/2$, $R=d_0$,
$T_1=T/2$, $\tau=T/4$ and proceeding as in \eqref{1-1}
and applying Proposition \ref{cauchy-data}, \ref{cauchy-data2},  we get the thesis \eqref{tesi}.

Let us prove now \eqref{gasta}.

First we observe that, in general, the Hausdorff distances
$d_{H}(\overline\Omega_1, \overline\Omega_2)$ and
$d_{H}(\partial{\Omega_1}, \partial{\Omega_2})$ are not equivalent.
However, in our regularity assumptions, the following estimate
\begin{eqnarray}
\label{e1}
d_{H}(\partial{\Omega_1}, \partial{\Omega_2})\le \eta(\varepsilon)
\end{eqnarray}
can be derived from \eqref{tesi}
using the arguments contained in the proof of Proposition 3.6 in \cite{Al-Be-Ro-Ve}.
We consider a point $P\in {I_1}^{r_0}$, a point $Q\in B_{2\eta(\varepsilon)}(P)\cap {I_2}^{r_0}$ and $t\in (t_1,T)$.
With no loss of generality we may assume that $P,Q \in \overline\Omega_1$,
hence we have that for any $t\in (t_1,T)$,
\begin{eqnarray}
\label{e2}
|\gamma_2(Q,t)-\gamma_1(P,t)|
&\leq&\left| \frac{\partial u_1}{\partial \nu}(P,t)\frac{1}{u_1(P,t)}
-\frac{\partial u_1}{\partial\nu}(Q,t)\frac{1}{u_1(Q,t)} \right|\nonumber\\
&&+\left| \frac{\partial u_2}{\partial \nu}(Q,t)\frac{1}{u_2(Q,t)}
-\frac{\partial u_1}{\partial \nu}(Q,t)\frac{1}{u_1(Q,t)}\right|.
\end{eqnarray}
We can split the first term on the right hand side of \eqref{e2} as follows
\begin{eqnarray*}
\left| \frac{\partial u_1}{\partial \nu}(P,t)\frac{1}{u_1(P,t)}
-\frac{\partial u_1}{\partial \nu}(Q,t)\frac{1}{u_1(Q,t)} \right|
&\leq& \left| \frac{\partial u_1}{\partial \nu}(P,t)\frac{1}{u_1(P,t)}
-\frac{\partial u_1}{\partial \nu}(Q,t)\frac{1}{u_1(P,t)} \right|\nonumber \\
&&+ \left| \frac{\partial u_1}{\partial \nu}(Q,t)\frac{1}{u_1(P,t)}
-\frac{\partial u_1}{\partial \nu}(Q,t)\frac{1}{u_1(Q,t)} \right|.
\end{eqnarray*}
From Remark \ref{reg-lib} and Proposition \ref{lemma-lower} we can infer that
\begin{eqnarray}
\left| \frac{\partial u_1}{\partial \nu}(P,t)\frac{1}{u_1(P,t)}
-\frac{\partial u_1}{\partial \nu}(Q,t)\frac{1}{u_1(Q,t)} \right|\leq C|P-Q|.
\end{eqnarray}
where $C>0$ is a constant depending on the a priori data only.
Hence by \eqref{e1} we can infer that
\begin{eqnarray}
\label{e3}
\left| \frac{\partial u_1}{\partial \nu}(P,t)\frac{1}{u_1(P,t)}-
\frac{\partial u_1}{\partial \nu}(Q,t)\frac{1}{u_1(Q,t)} \right|\le \eta(\varepsilon)\ ,
\end{eqnarray}
up to a possible replacing of the constants $C$ and $\beta$ in \eqref{eta}.

Analogously we can split the second term on the right hand side of \eqref{e2} as follows
\begin{eqnarray*}
\left| \frac{\partial u_2}{\partial \nu}(Q,t)\frac{1}{u_2(Q,t)}-
\frac{\partial u_1}{\partial \nu}(Q,t)\frac{1}{u_1(Q,t)} \right|
&\leq& \left| \frac{\partial u_2}{\partial \nu}(Q,t)\frac{1}{u_2(Q,t)}
-\frac{\partial u_1}{\partial \nu}(Q,t)\frac{1}{u_2(Q,t)} \right|\nonumber \\
&&+ \left| \frac{\partial u_1}{\partial \nu}(Q,t)\frac{1}{u_2(Q,t)}
-\frac{\partial u_1}{\partial \nu}(Q,t)\frac{1}{u_1(Q,t)} \right|
\end{eqnarray*}
From Remark \ref{reg-lib} and Proposition \ref{lemma-lower} we can infer that
\begin{eqnarray*}
&&\left| \frac{\partial u_2}{\partial \nu}(Q,t)\frac{1}{u_2(Q,t)}
-\frac{\partial u_1}{\partial \nu}(Q,t)\frac{1}{u_1(Q,t)} \right|\\
&\leq& C\left|\frac{\partial u_2}{\partial \nu}(Q,t)
-\frac{\partial u_1}{\partial \nu}(Q,t) \right| + C\left|u_1(Q,t)-u_2(Q,t) \right|\
\end{eqnarray*}
where $C>0$ is a constant depending on the a priori data only.
Dealing as in Proposition \ref{sta-lam2}, we have that for any $t\in (t_1,T)$
\begin{eqnarray*}
\|u_1(\cdot,t)-u_2(\cdot,t) \|_{C^1({I_2}^{r_0})}\le \eta(\varepsilon).
\end{eqnarray*}
Hence we have that
\begin{eqnarray}
\label{e4}
\left| \frac{\partial u_2}{\partial \nu}(Q,t)\frac{1}{u_2(Q,t)}
-\frac{\partial u_1}{\partial \nu}(Q,t)\frac{1}{u_1(Q,t)} \right|\le \eta(\varepsilon).
\end{eqnarray}
Up to a possible replacing of the constants $C$ and $\beta$ in \eqref{eta}.
Combining \eqref{e3} and \eqref{e4} we obtain that for any $t\in (t_1,T)$
\begin{eqnarray}
|\gamma_2(Q,t)-\gamma_1(P,t)|\le \eta(\varepsilon).
\end{eqnarray}
Being such an estimate independent from $P$, $Q$ and $t$ the thesis \eqref{gasta} follows.
\end{proof}

\section{Proofs of Propositions \ref{cauchy-data}, \ref{cauchy-data2}, \ref{lemma-lower}, \ref{stimadalbasso}}
\label{sec4}
We proceed with the proof of Proposition \ref{cauchy-data}.
For this purpose we recall a result of \cite{Ca-Ro-Ve}, that will be used
several times in the next proofs.
\begin{theorem}[Theorem 3.3.1 \cite{Ca-Ro-Ve}]
\label{teo3.3.1}
Let $\Omega$ be a domain satisfying (\ref{Omega3}).
Let $P_{1}\in \Sigma$ be such that $\partial\Omega\cap B_{r_{0}}(P_{1})\subset\Sigma$.
Let $u\in H^{2,1}(\Omega \times (0,T))$ be a solution to
\begin{equation*}
\left\{\begin{array}{ll}
u_{t}=\Delta u & \textrm{ in }\Omega\times(0,T) \\
u(x,0)=0 &
\end{array}\right.
\end{equation*}
satisfying
\begin{equation*}
\left\Vert u\right\Vert_{H^{3/2,3/4}(\Sigma \times (0,T))}\leq C\,\overline{\varepsilon },
\qquad
\left\Vert\frac{\partial u}{\partial\nu}\right\Vert _{H^{1/2,1/4}(\Sigma\times (0,T))}\leq C\,\overline{\varepsilon },
\end{equation*}
where $C$ depends on $T,r_{0}$. Then, for every $t_{0}\in [0,T],$ we have
\begin{equation*}
\left\Vert u(\cdot,t_{0})\right\Vert_{L^{2}(B_{\overline{\theta}r_{0}}(P_{2}))}
\leq C\left\Vert u\right\Vert _{H^{2,1}(\Omega \times(0,T))}^{1-\overline\tau}\overline{\varepsilon }^{\overline\tau},
\end{equation*}
where $P_{2}=P_{1}-\overline{\theta}r_{0}\nu $, $\nu$ is the outer unit normal to $\Omega$ at $P_{1}$,
$\overline\tau$, $0<\overline\tau<1$, is an absolute constant, $\overline{\theta }$,
$0<\overline{\theta }<1/2$, depends on $L$ only, $C\geq 1$ depends on $L$ and $r_{0}^2/T$ only.
\end{theorem}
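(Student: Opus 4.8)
The plan is to turn the smallness of the Cauchy data on $\Sigma$ into smallness of $u$ on a ball tangent to $\partial\Omega$ at $P_1$, and then to carry that smallness into $B_{\overline\theta r_0}(P_2)$ by iterating a two--sphere one--cylinder inequality; the hypothesis $u(\cdot,0)=0$ is what makes the conclusion uniform in $t_0$. First I would use $u(\cdot,0)=0$ to extend $u$ by zero to $t\le 0$: since then $u_t(\cdot,0)=\Delta u(\cdot,0)=0$, the extension still solves $u_t=\Delta u$ across $t=0$, so that every parabolic cylinder $Q^{t_0}_{R,\varphi}$ with $t_0\in[0,T]$ lies in the extended time domain. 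After a rigid motion I would take $P_1=0$, $\Omega\cap B_{r_0}=\{x_n>\varphi(x')\}$ with $\varphi\in C^{1,1}$, and flatten $\{x_n=\varphi(x')\}$ to $\{y_n=0\}$ by a $C^{1,1}$ map; this turns $u_t=\Delta u$ into $Lv=0$ with a symmetric, uniformly elliptic, Lipschitz matrix $\{g^{ij}\}$ whose constants depend on $L$ only, carrying $\Sigma$ to a flat piece $\Gamma$ and keeping the Cauchy data of $v$ of size $O(\overline\varepsilon)$ in $H^{3/2,3/4}$ and $H^{1/2,1/4}$ on $\Gamma$.

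Next I would remove the Cauchy data by subtracting a lifting. By parabolic trace theory (\cite{Li-Ma}) there is $w\in H^{2,1}$ with $w=v$ and $g^{ij}\partial_j w\,\nu_i=g^{ij}\partial_j v\,\nu_i$ on $\Gamma$ and $\|w\|_{H^{2,1}}\le C\overline\varepsilon$ --- these are precisely the norms in which the hypotheses are phrased, which is why $H^{3/2,3/4}$ and $H^{1/2,1/4}$ occur. Then $V:=v-w$ has vanishing Cauchy data on $\Gamma$, so after extending the coefficients $g^{ij}$ across $\Gamma$ by even reflection (preserving ellipticity and the Lipschitz bound) the extension $\widetilde V$ of $V$ by zero to the exterior side $\{y_n<0\}$ lies in $H^{2,1}$ of a full cylinder and solves $L\widetilde V=f$, where $f=-Lw$ on the interior side and $f=0$ on the exterior, so $\|f\|_{L^2}\le C\|w\|_{H^{2,1}}\le C\overline\varepsilon$. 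In particular $\widetilde V\equiv 0$ on $\{y_n<0\}$, hence $\int_{B_r}\widetilde V^2(\cdot,t_0)\le C\overline\varepsilon^2$ for every $t_0$ on any small ball $B_r$ centred just outside $\Gamma$.

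Then I would propagate the smallness inward. Applying the interior two--sphere one--cylinder inequality (\cite[Theorem 3.3.3]{Ve2}; the boundary companion is Theorem \ref{tre-teo}), in the version that allows an $L^2$ right--hand side $f$, to $\widetilde V$ along a chain of overlapping concentric two--ball pairs whose centres run from $B_r$ to $B_{\overline\theta r_0}(P_2)$, each step converts the small $L^2$ mass at time $t_0$ on one ball into control on the next by H\"older interpolation with an exponent in $(0,1)$: the ``large'' factor is bounded by the global norm $\|u\|_{H^{2,1}(\Omega\times(0,T))}$ and the ``small'' factor carries $\overline\varepsilon^2+\|f\|^2\lesssim\overline\varepsilon^2$ from the previous step. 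Since $\partial\Omega\in C^{1,1}$ with constant $L$, for $\overline\theta$ small depending on $L$ the ball $B_{\overline\theta r_0}(P_2)$ is an interior ball tangent at $P_1$ and the chain can be taken of length depending on $L$ only; the composite interpolation exponent is therefore an absolute constant $\overline\tau\in(0,1)$ and $C$ depends only on $L$ and $r_0^2/T$. Undoing $v=\widetilde V+w$ (with $\|w\|\le C\overline\varepsilon$) and the change of variables gives $\|u(\cdot,t_0)\|_{L^2(B_{\overline\theta r_0}(P_2))}\le C\|u\|_{H^{2,1}}^{1-\overline\tau}\overline\varepsilon^{\overline\tau}$ for every $t_0\in[0,T]$.

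The main obstacle is the match between the fractional trace norms and the source term: the lifting $w$ must be built so that $\|w\|_{H^{2,1}}$, and hence $\|f\|_{L^2}$, is genuinely $O(\overline\varepsilon)$, which is available exactly because the Cauchy data are measured in $H^{3/2,3/4}$ and $H^{1/2,1/4}$ (the trace spaces of $H^{2,1}$); losing a derivative here would spoil the H\"older rate. The second delicate point is keeping the number of cylinders in the propagation chain bounded in terms of $L$ alone, so that $\overline\tau$ stays absolute and the dependence on $t_0$ is genuinely removed via the zero extension in time; the remainder is routine bookkeeping of constants under the flattening and scaling. A similar boundary treatment appears in \cite{Es-Fe-Ve,Ca-Ro-Ve}.
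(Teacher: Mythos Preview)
The paper does not prove this theorem; it is quoted from \cite[Theorem~3.3.1]{Ca-Ro-Ve} and used as a black box in the proofs of Propositions~\ref{cauchy-data} and~\ref{cauchy-data2}. There is therefore no proof in the present paper to compare your proposal against.

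That said, your outline is the standard route to such Cauchy stability estimates and matches the strategy of \cite{Ca-Ro-Ve}: flatten the boundary, exploit that $H^{3/2,3/4}\times H^{1/2,1/4}$ are the parabolic trace spaces of $H^{2,1}$ so that the Cauchy data lift to a function $w$ with $\|w\|_{H^{2,1}}\le C\overline\varepsilon$, extend $v-w$ by zero across the flattened boundary, and propagate the vanishing inward by iterating a two--sphere one--cylinder inequality. The zero extension in time via $u(\cdot,0)=0$ is precisely the device that makes the estimate uniform in $t_0\in[0,T]$, and the $C^{1,1}$ regularity of $\partial\Omega$ is what keeps the flattened coefficients Lipschitz, as required by Theorem~\ref{tre-teo}.

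One technical point deserves care. Theorem~\ref{tre-teo} and its interior companion, as stated here, apply to solutions of $|Lu|\le\Lambda(|\nabla u|/R+|u|/R^2)$, i.e.\ homogeneous equations up to lower--order perturbations of $u$ itself; they do not directly accommodate $L\widetilde V=f$ with a generic $L^2$ source. You need either the inhomogeneous variant of the three--cylinder/Carleman estimate (available in \cite{Es-Fe-Ve,Ve2}, with an additive $\|f\|_{L^2}$ contribution), or to arrange the lifting so that $Lw=0$ too --- for instance by solving a local initial--boundary value problem with the given traces, which still yields $\|w\|_{H^{2,1}}\le C\overline\varepsilon$ by parabolic regularity. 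With either fix the argument closes with the stated dependence of $\overline\theta$, $\overline\tau$ and $C$.
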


\begin{proof}[Proof of Proposition \ref{cauchy-data}]
We prove the proposition for $i=1$, as the other case $i=2$ is analogous.
In \cite{Li2} it is proved that
there exists a function $\widetilde{d}(x),$ labeled regularized distance,
$\widetilde{d}\in C^{2}(\Omega_{1})\cap C^{1,1}(\overline{\Omega}_{1})$
such that the following facts hold
\begin{equation*}
\begin{array}{l}
i)\,\xi_{1}\leq \displaystyle{\frac{\mathrm{dist}(x,\partial \Omega _{1})}{\widetilde{d}(x)}}\leq \xi_{2}, \\[2mm]
ii)\,\vert \nabla \widetilde{d}(x)\vert \geq c_{1}\text{ for every }
x\text{ such that }\textrm{dist}(x,\partial \Omega _{1})\leq br_{0}, \\[2mm]
iii)\,\Vert\widetilde{d}\Vert _{C^{1,1}}\leq c_{2}r_{0},
\end{array}
\end{equation*}
where $\xi_{1},\xi_{2},c_{1},c_{2},b$ are positive
constants depending on $L$ only (see also \cite[Lemma 5.2]{Al-Be-Ro-Ve}).
For $r>0$ we define
\begin{equation*}
\widetilde{\Omega }_{1,r}=\{x\in \Omega _{1}:\widetilde{d}(x)>r\}.
\end{equation*}
By \cite[Lemma 5.3]{Al-Be-Ro-Ve}, there exists a constant $a$, depending on $L$ only, such
that for every $r$, $0<r\leq ar_{0},$ $\widetilde{\Omega }_{1,r}$ is
connected with boundary of class $C^{1}$ and the following facts hold%
\begin{eqnarray}
\label{70}
&&\xi_{1}r\leq \textrm{dist}(x,\partial \Omega _{1})\leq \xi_{2}r,
\quad\forall\,x\in \partial \widetilde{\Omega }_{1,r},\\[2mm]
\label{71}
&&\left\vert \Omega _{1}\backslash \widetilde{\Omega }_{1,r}\right\vert \leq\xi_{3}Mr_{0}^{n-1}r,\\[2mm]
\label{72}
&&\left\vert \partial \widetilde{\Omega }_{1,r}\right\vert _{n-1}\leq \xi_{4}Mr_{0}^{n-1}.
\end{eqnarray}
Also, for every $x\in \partial\tilde{\Omega}_{1,r},$ there exists
$y\in \partial\Omega _{1}$ such that
\begin{equation}
\left\vert y-x\right\vert =\mathrm{dist}(x,\partial \Omega _{1}),\qquad
\left\vert \nu (x)-\nu (y)\right\vert \leq \xi_{5}\frac{r}{r_{0}},
\label{73}
\end{equation}
where $\nu (x),$ $\nu (y)$ denote the outer unit normal to $\widetilde{\Omega }_{1,r}$
at $x$ and to $\Omega _{1}$ at $y$ respectively.
Here $\xi_{j}$, $j=1,\dots,5,$ are constants depending on $L$ only.

Since $\xi_{2}r_{0}\theta \leq\frac{r_{0}}{16}$,
let us define $\theta =\min\{a,\frac{1}{16(1+M^{2})\xi_{2}}\}$ and
$\Sigma _{\xi_{2}r_{0}\theta }=\{x\in \Omega _{1}
\,:\,\textrm{dist}(x,\Sigma)=\xi_{2}r_{0}\theta\}\equiv \{x\in \Omega _{2}
\,:\,\textrm{dist}(x,\Sigma )=\xi_{2}r_{0}\theta \}$.

\noindent Let $\tilde{V}_{r}$ be the connected
component of $\tilde{\Omega }_{1,r}\cap \tilde{\Omega }_{2,r}$ whose
closure contains $\Sigma_{\xi_{2}r_{0}\theta}$.
We have
\begin{eqnarray}
&&\Omega _{1}\setminus G\subset[(\Omega _{1}\setminus\tilde{\Omega}_{1,r})\setminus G]
\cup[\tilde{\Omega }_{1,r}\backslash\tilde{V}_{r}]\\[2mm]
\label{80}
&&\partial (\tilde{\Omega}_{1,r}\setminus\tilde{V}_{r})=\tilde{\Gamma}_{1,r}\cup\tilde{\Gamma}_{2,r},
\end{eqnarray}
where $\tilde{\Gamma}_{1,r}$ is the part of the boundary contained in
$\partial\tilde{\Omega}_{1,r}$ and $\tilde{\Gamma }_{2,r}$ is
contained in $\partial \tilde{\Omega }_{2,r}\cap\partial\tilde{V}_{r}$.
We denote $\omega_{r}=\tilde{\Omega }_{1,r}\setminus\tilde{V}_{r}$.
For $t_{1}\leq t\leq T$, by (\ref{80})
\begin{eqnarray}
\label{81}
&&\int_{\Omega _{1}\setminus G}u_{1}^{2}(x,t)\lambda _{1}^{2}(x,t)dx\\[2mm]
&\leq&
\int_{(\Omega _{1}\setminus \tilde{\Omega }_{1,r})\setminus G}u_{1}^{2}(x,t)\lambda _{1}^{2}(x,t)dx+
\int_{\tilde{\Omega}_{1,r}\setminus \tilde{V}_{r}}u_{1}^{2}(x,t)\lambda _{1}^{2}(x,t)dx.\nonumber
\end{eqnarray}
Since $u_{1}^{2}(x,t)\lambda_{1}^{2}(x,t)=\left(\tilde{u}_1(x,t)-u_1(x,t)\right) ^{2}$,
by (\ref{36}), (\ref{42}), (\ref{71}$)$, there exists a constant $C$ depending on the a priori data only
such that, for $t_1\leq t\leq T$,
\begin{equation}
\label{82}
\int_{(\Omega _{1}\setminus\tilde{\Omega }_{1,r})\setminus G}u_{1}^{2}(x,t)\lambda_{1}^{2}(x,t)dx\leq Cr.
\end{equation}
Let us evaluate $\int_{\tilde{\Omega }_{1,r}\setminus\tilde{V}_{r}}u_{1}^{2}(x,t)\lambda _{1}^{2}(x,t)dx$.
Recalling that $\lambda _{1}$ solves (\ref{eq-lambda}), we get
\begin{equation}
\int_{t_{1}}^{t}ds\int_{\omega _{r}}u_{1}^{2}\lambda _{1t}\lambda_{1}dx
=\int_{t_{1}}^{t}ds\int_{\partial \omega _{r}}u_{1}^{2}\frac{\partial\lambda _{1}}{\partial \nu }\lambda _{1}d\sigma
-\int_{t_{1}}^{t}ds\int_{\omega_{r}}u_{1}^{2}\left\vert \nabla \lambda _{1}\right\vert ^{2}dx,
\label{83}
\end{equation}
where $\nu $ is the outer normal to $\omega_{r}$. We have, integrating by
parts the left hand side and since $\lambda_{1}(x,t_{1})=0$,
\begin{eqnarray}
\label{V4.43}
&&\frac{1}{2}\int_{t_{1}}^{t}ds\int_{\omega _{r}}u_{1}^{2}\lambda _{1t}\lambda
_{1}dx=\frac{1}{2}\int_{\omega _{r}}u_{1}^{2}\lambda _{1}^{2}\left\vert
_{t_{1}}^{t}\right. dx-\int_{t_{1}}^{t}ds\int_{\omega _{r}}\lambda
_{1}^{2}u_{1}u_{1t}\\[2mm]\nonumber
&=&\frac{1}{2}\int_{\omega _{r}}u_{1}^{2}(x,t)\lambda
_{1}^{2}(x,t)dx-\int_{t_{1}}^{t}ds\int_{\omega _{r}}\lambda
_{1}^{2}u_{1}u_{1t}.
\end{eqnarray}
Therefore plugging \eqref{83} into \eqref{V4.43}, we have
\begin{eqnarray*}
\frac{1}{2}\int_{\omega _{r}}u_{1}^{2}(x,t)\lambda_{1}^{2}(x,t)dx
+\int_{t_{1}}^{t}ds\int_{\omega _{r}}u_{1}^{2}\left\vert\nabla \lambda _{1}\right\vert ^{2}dx= \\
\int_{t_{1}}^{t}ds\int_{\omega _{r}}\lambda_{1}^{2}u_{1}u_{1t}+\int_{t_{1}}^{t}ds
\int_{\partial \omega _{r}}u_{1}^{2}\dfrac{\partial \lambda _{1}}{\partial \nu }\lambda _{1}d\sigma.
\end{eqnarray*}
By Proposition \ref{lemma-lower}, (\ref{phi001}), (\ref{reg-lib}), (\ref{54}), we get
\begin{eqnarray}
&&C_1\int_{\omega _{r}}\lambda _{1}^{2}(x,t)d\sigma
\leq\int_{t_{1}}^{t}ds\int_{\omega _{r}}\lambda_{1}^{2}u_{1}u_{1t}
+\int_{t_{1}}^{t}ds\int_{\partial \omega _{r}}u_{1}^{2}\frac{\partial \lambda _{1}}{\partial \nu }\lambda _{1}dx\nonumber\\
&\leq&C_2\left(\int_{t_1}^{t_2}ds\int_{\omega_r}\lambda_1^2d\sigma+
T\max_{t_{1}\leq\theta\leq t}\int_{\partial\omega _{r}}
\left\vert\frac{\partial \lambda_{1}(x,\theta)}{\partial \nu }\right\vert d\sigma\right),
\label{87}
\end{eqnarray}
for $t_{1}\leq t\leq T$, where $C_1,C_2$ depend on a priori data only.
Denoting by $\overline t$, $\overline{t}\in [t_{1},t]$, where the maximum is achieved,
by Gronwall inequality, we obtain
\begin{equation}
\label{89bis}
\int_{\omega _{r}}\lambda _{1}^{2}(x,t)dx
\leq C\int_{\partial \omega_{r}}
\left\vert\frac{\partial\lambda_{1}(x,\overline{t})}{\partial \nu }\right\vert d\sigma,
\quad\forall\,t\in[t_{1},T],
\end{equation}
where $C$ is a constant depending on the a priori data and on $t_{1}$.
Recalling that $\partial \omega_{r}\subset\tilde{\Gamma }_{1,r}\cup\tilde{\Gamma}_{2,r}$
we distinguish two situations:
either $x\in\tilde{\Gamma}_{1,r}$ or $x\in \tilde{\Gamma}_{2,r}$.
Let $x\in \tilde{\Gamma }_{1,r}$. By (\ref{70})
$\mathrm{dist}(x,\partial\Omega_{1})\leq\xi_{2}r$.
On the other hand $\mathrm{dist}(x,\Sigma )>\xi_{2}r$ (see \cite[Proposition 3.1]{Ve2}).
Hence there exists $y\in \partial \Omega_{1}\setminus\Sigma$ such that
$\left\vert y-x\right\vert=\mathrm{dist}(x,\partial \Omega_{1})\leq\xi_{2}r$.
Since $\displaystyle\frac{\partial\lambda _{1}}{\partial\nu }=0$ on
$I_{1}\times[t_{1},T]$, by \eqref{54}, (\ref{70}), (\ref{73})
\begin{equation}
\left\vert \frac{\partial \lambda _{1}(x,\overline{t})}{\partial \nu }
\right\vert \leq C\frac{r}{r^2_{0}},  \label{90}
\end{equation}
that implies
\begin{equation}
\int_{\tilde{\Gamma}_{1,r}}
\left\vert \frac{\partial \lambda _{1}(x,\overline{t})}{\partial\nu}\right\vert d\sigma
\leq C r\, r_{0}^{n-2},
\label{93}
\end{equation}
where $C$ is a constant depending on the a priori data only.
Let us consider now $x\in \tilde{\Gamma }_{2,r}$.
As before there exists $y\in \partial \Omega _{1}\setminus\Sigma $ such that
$\left\vert y-x\right\vert=\mathrm{dist}(x,\partial \Omega_{2})\leq\xi_{2}r$.
Since $\displaystyle\frac{\partial \lambda_{2}}{\partial\nu}=0$ on
$I_{2}\times[t_{1},T],$ we have that
\begin{equation*}
\left\vert\frac{\partial \lambda_{1}(x,\overline{t})}{\partial\nu}\right\vert
\leq \left\vert\nabla\lambda _{1}(x,\overline{t})-\nabla\lambda_{2}(x,\overline{t})\right\vert
+\left\vert \nabla \lambda _{2}(x,\overline{t})-\nabla \lambda_{2}(y,\overline{t})\right\vert.
\end{equation*}
Thus we get
\begin{equation}
\int_{\tilde{\Gamma}_{2,r}}\left\vert\frac{\partial\lambda _{1}(x,\overline{t})}{\partial\nu}\right\vert d\sigma
\leq C\,r_0^{n-2}r+\int_{\tilde{\Gamma}_{2,r}}\left\vert\nabla\lambda _{1}(x,\overline{t})
-\nabla \lambda _{2}(x,\overline{t})\right\vert d\sigma,
\label{95}
\end{equation}
where $C$ is a constant depending on the a priori data only.

Let us consider the integral on the right hand side of (\ref{95}). First
observe that
\begin{eqnarray}
\label{96}
&&\nabla \lambda _{1}-\nabla \lambda_{2}\\
&=&\frac{1}{\left(u_{1}u_{2}\right)^{2}}\{u_{1}u_{2}^{2}(\nabla \widetilde{u}_{1}-\nabla \widetilde{u}_{2})
-\tilde{u}_{1}u_{2}^{2}(\nabla u_{1}-\nabla u_{2})\nonumber\\
&&+u_{2}^{2}\nabla \tilde{u}_{2}\left( u_{1}-u_{2}\right)
-u_{2}^{2}\nabla u_{2}\left( \tilde{u}_{1}-\tilde{u}_{2}\right)
+(u_{2}^{2}-u_{1}^{2})(u_{2}\nabla \tilde{u}_{2}-\tilde{u}_{2}\nabla u_{2})\}.\nonumber
\end{eqnarray}
Now labeling $w=u_{1}-u_{2}$ and $\tilde{w}=\tilde{u}_{1}-\tilde{u}_{2}$
and taking into account (\ref{95}) and (\ref{96}), we get
\begin{eqnarray*}
&&\int_{\tilde{\Gamma}_{2,r}}\left\vert\frac{\partial \lambda _{1}(x,\overline{t})}{\partial\nu}\right\vert d\sigma\\[2mm]
&\leq&Cr_0^{n-1}\left(\frac{r}{r_0}
+r_0\max_{x\in \tilde{V}_{r}}\left\vert \nabla \tilde{w}(x,\overline{t})\right\vert
+r_0\max_{x\in \tilde{V}_{r}}\left\vert \nabla w(x,\overline{t})\right\vert\right.\\[2mm]
&&\left.\qquad+\max_{x\in \tilde{V}_{r}}\left\vert \tilde w(x,\overline{t})\right\vert
+\max_{x\in \tilde{V}_{r}}\left\vert w(x,\overline{t})\right\vert\right),
\end{eqnarray*}
where $C$ is a constant depending on the a priori data only.
To evaluate maximum of $w,\tilde w$ and their gradients we can proceed as \cite[Proposition 5.3]{Ca-Ro-Ve}.
Let us briefly sketch the main items of this proof for the $\nabla w$, for instance.
Suppose $\underset{x\in \overline{\tilde{V}}_{r}}\max\left\vert\nabla w(x,\overline{t})\right\vert
=\left\vert\nabla w(\overline{x},\overline{t})\right\vert
=\left\Vert \nabla w(\cdot,\overline t)\right\Vert_{L^{\infty}(B_{r}(\overline{x}))}$,
with $\overline{x}\in \tilde{V}_{r}$.
By interpolation inequality (see \cite[A2 page 553 with $\alpha=1/2$]{Ca-Ro-Ve}), \eqref{phi001} and \eqref{reg-lib}, we have
\begin{equation}
\left\Vert \nabla w(\cdot,\overline{t})\right\Vert_{L^{\infty }(B_{r}(\overline{x})}
\leq \frac{Cr_0}{r^{1+\frac{n}{3(n+2)}}}\,
\left(\int_{B_{r}(\overline{x})}w^2(x,\overline{t})dx\right)^{\frac{1}{3(n+2)}},
\label{106}
\end{equation}
where $C$ depends on $E$.
Now, in order to apply \cite[Theorem 3.3.1]{Ca-Ro-Ve}, we estimate
$\left\Vert w\right\Vert _{H^{3/2,1/2}(\Sigma \times (t_{1},T))}
=\left\Vert u_{1}-u_{2}\right\Vert _{H^{3/2,1/2}(\Sigma \times (t_{1},T))}$ in terms of
$\left\Vert u_{1}-u_{2}\right\Vert _{L^{2}(\Sigma \times (t_{1},T))}$ and of
the a priori data. The functions $w,$ $w_{t},$ $w_{tt}$ satisfy the problem
$$\left\{\begin{array}{ll}
z_{t}-\Delta z=0,&\textrm{ in }G\times (0,T),\\[2mm]
z=0,&\textrm{ on }G\times\{0\},\\[2mm]
\frac{\partial z}{\partial \nu }=0,&\textrm{ on }A\times (0,T).
\end{array}\right.$$
Hence, recalling that $A^{r_{0}}=\{x\in A\,:\,\textrm{dist}(x,I)> r_{0}\}$ and denoting
$\mathcal{U}^{r_{0}/8}=\{x\in G\,:\, \textrm{dist}(x,A^{r_{0}})\leq r_{0}/8\}$,
we may apply the local  bound estimates \cite{La-So-Ur} obtaining,
\begin{equation}
\left\Vert w_{t}\right\Vert_{L^{\infty }(\mathcal{U}^{r_{0}/8}\times(0,T))}
\leq C\left\Vert g\right\Vert_{H^{1/2,1/4}(A\times(0,T))}
\leq C\left\Vert g\right\Vert _{C^{1,1}(A\times (0,T))},
\label{101}
\end{equation}
\begin{equation}
\left\Vert w_{tt}\right\Vert _{L^{\infty }(\mathcal{U}^{r_{0}/8}\times(0,T))}
\leq C\left\Vert g\right\Vert _{H^{1/2,1/4}(A\times(0,T))}
\leq C\left\Vert g\right\Vert _{C^{1,1}(A\times (0,T))},
\label{102}
\end{equation}
where $C$ depends on $r_{0}^2T^{-1},L$.
We may also think to $w(\cdot ,t)$, for a fixed $t\in(0,T)$,
 as the solution of the elliptic problem
\begin{equation*}
\left\{\begin{array}{ll}
\Delta w(x,t)=w_{t}(x,t) & \textrm{ in }G, \\[2mm]
\frac{\partial w}{\partial \nu }(x,t)=0, & \textrm{ on }A,
\end{array}\right.
\end{equation*}
and, similarly, we may think to $w_{t}(\cdot ,t)$ as the solution of the elliptic problem
\begin{equation*}
\left\{\begin{array}{ll}
\Delta w_{t}(x,t)=w_{tt}(x,t) & \textrm{ in }G, \\[2mm]
\frac{\partial w_{t}}{\partial \nu }(x,t)=0,& \textrm{ on }A.
\end{array}\right.
\end{equation*}
By $L^{p}$ regularity estimates (see \cite{Gi-Tr}), by (\ref{101}), (\ref{102}),
by trace inequalities and by the immersion of $W_{p}^{2-1/p}$ in $H^{2-1/p}$ for $p>2,$ we have
\begin{equation*}
\underset{t\in [0,T]}\sup(\left\Vert w(\cdot ,t)\right\Vert_{H^{2-1/p}(\Sigma)}
+\left\Vert w_{t}(\cdot ,t)\right\Vert_{H^{2-1/p}(\Sigma)})
\leq C\left\Vert g\right\Vert _{C^{1,1}(A\times(0,T))},
\end{equation*}
for any $p>2,$ where $C$ depends on $L,r^2_{0}/T$ only. Therefore
\begin{equation}
\left\Vert w\right\Vert_{H^{\alpha ,\alpha /2}(\Sigma \times (0,T))}\leq
C\left\Vert g\right\Vert_{C^{1,1}(A\times (0,T))},
\label{104}
\end{equation}
with $\alpha =2-1/p>3/2,$ where $C$ depends on $L$, $r^2_{0}/T$ only.
By interpolation we have
\begin{equation}
\left\Vert w\right\Vert_{H^{3/2,3/4}(\Sigma \times (0,T))}
\leq C\left\Vert w\right\Vert_{H^{\alpha ,\alpha /2}
(\Sigma \times (0,T))}^{1-\theta}\left\Vert w\right\Vert_{L^{2}(\Sigma\times(0,T))}^{\theta},
\label{105}
\end{equation}
where $\theta $ is given by $(1-\theta )\alpha =3/2$ (see \cite{Li-Ma}).
By (\ref{36}), (\ref{45}), (\ref{104}), (\ref{105}), choosing $p=4$ we have
\begin{equation}
\left\Vert w\right\Vert_{H^{3/2,3/4}(\Sigma \times (0,T))}\leq C\,\varepsilon^{1/7},
\label{108}
\end{equation}
where $C$ depends on $L,r^2_{0}/T,E$ only.
Let $P_{1}\in \Sigma $ be such that $\partial\Omega\cap B_{r_{0}}(P_{1})\subset\Sigma $.
By Theorem \ref{teo3.3.1} and (\ref{108}), we get
\begin{equation}
\left\Vert w(\cdot,\overline{t})\right\Vert _{L^{2}(B_{\overline{\theta }r_{0}}(P_{2}))}
\leq C\varepsilon ^{\overline\tau/7},
\label{103}
\end{equation}
where $P_{2}$, $\overline{\theta}$, $\overline\tau$ are as in the
above theorem, $C\geq 1$ depends on $L,r^2_{0}/T,t_{1},E$
only and $\overline{t}$ is the point in (\ref{89bis}).
Now, let $\sigma$ be an arc in $\tilde{V}_{r}$ joining $\overline{x}$ with $P_{2}$
(since $\overline{\theta }r_{0}>r,$ the point $P_{2}\in\tilde{V}_{r}).$
Let us define $\{x_{i}\}$, $i=1,2,...,s$, as follows: $x_{1}=P_{2}$,
$x_{i+1}=\sigma(\eta_{i})$, where
$\eta _{i}=\max\{\eta\,:\,\left\vert \sigma(\eta)-x_{i}\right\vert =2\overline{\theta }r\}$ if
$\left\vert x_{i}-\overline{x}\right\vert >2\overline{\theta }r$, otherwise $i=s$ and stop the process.
By construction, the balls $B_{\overline{\theta}r}(x_{i})$ are pairwise disjoint,
$\left\vert x_{i+1}-x_{i}\right\vert =2\overline{\theta}r,$ for $i=1,2,...,s-1$,
$\left\vert x_{s}-\overline{x}\right\vert \leq 2\overline{\theta }r$. Notice that $s\leq\frac{C}{r^{n}}$,
where $C$ depends on $M$ only.
By an iterated application of the two-spheres one-cylinder inequality
over the chain of balls $B_{r_{1}}(x_{i}),$ for $r_{1}=\frac{\overline{\theta }}{26\sqrt{2}}r$,
by \eqref{reg-lib}, \eqref{81}, \eqref{89bis}, \eqref{103}, \eqref{106},
we get, for $t_1\leq t\leq T$,
\begin{equation}
\int_{\Omega _{1}\setminus G}u_{1}^{2}(x,t)\lambda_{1}^{2}(x,t)dx
\leq Cr_0^n\left(\frac{r}{r_0}
+\left( \frac{r_0}{r}\right)^{\frac{4n+10}{3n+6}}\varepsilon^{\frac{2\overline{\tau}^{s+1}}{7(3n+6)}}\right),
\label{113}
\end{equation}
where $C$ depends on $r_0^2(T-t_1)^{-1},L,E$.
With a suitable choice of $r=r(\varepsilon ),$ by standard arguments we get the thesis.
\end{proof}

\begin{proof}[Proof of Proposition \ref{cauchy-data2}]
By the use of the divergence theorem over the Lipschitz domains $\Omega_1$ and $G$
and the same arguments based on the application of the Gronwall inequality developed in Proposition \ref{cauchy-data},
we have that
\begin{eqnarray}
\int_{\Omega_1\setminus \overline G}\lambda_1(x,t)^2dx \leq C \int_{\partial(\Omega_1\setminus \overline G)}
|\partial_\nu\lambda_1(x,\bar{t})|d\sigma.
\end{eqnarray}
where $C>0$ is a constant depending on the a priori data only.
Moreover we observe that
$\partial(\Omega_1\setminus\overline G)\subset (\partial\Omega_1\setminus A)
\cup(\partial\Omega_2\cap\partial G\setminus U_A^{\frac{r_0}{2}})$.

Since $\partial_\nu\lambda_1=0$ on $\partial\Omega_1\setminus A$ and since $\partial_\nu\lambda_2=0$
on $\partial\Omega_2\setminus A$ we found that
\begin{eqnarray*}
\int_{\Omega_1\setminus\overline G}\lambda_1(x,t)^2dx &\leq& C
\int_{(\partial\Omega_2\cap\partial G)\setminus U_A^{\frac{r_0}{2}}}|\partial_\nu\lambda_1(x,\bar{t})-
\partial_\nu\lambda_2(x,\bar{t}) |dx\leq \\
&\leq& C_1 r_0^{n+1}\max_{\partial G}|\nabla \lambda_1(x,\bar{t})-\nabla \lambda_2(x,\bar{t})|,
\end{eqnarray*}
where $C_1$ is a constant depending on the a-priori data only.
By the same argument of Proposition \ref{cauchy-data} and using the same notations we get
\begin{eqnarray*}
&&\int_{\Omega_1\setminus\overline G}\lambda_1(x,t)^2dx\leq
C_2r_0^n\left(\max_{\partial G}|w|+\max_{\partial G}|\tilde{w}|
+r_0\max_{\partial G}|\nabla w| +r_0\max_{\partial G}|\nabla\tilde{w}| \right),
\end{eqnarray*}
where $C_2$ is a constant depending on the a priori data only.

In order to control the maximum of $w, \tilde{w}$ and their gradients we argue as in Proposition 5.4 of \cite{Ca-Ro-Ve}.
We carry out our analysis for the term $\nabla w$, the other cases being analogous.
Let $P_1\in A$ be such that $\partial \Omega \cap B_{r_0}(P_1)\subset A$ and let
$P_2= P_1 -\tilde{\theta}r_0\nu$ with $0<\tilde{\theta}<\frac{1}{4}$ and where
$\nu$ denotes the outer unit normal to $\Omega_1$ at $P_1$.
Now by Theorem \ref{teo3.3.1} arguing as in \eqref{103},
we may infer
\begin{eqnarray}
\label{CauchyStabilityEstimate}
\|w(\cdot,\bar{t})\|_{L^2(B_{\tilde{\theta}r_0}(P_2))}\le \tilde{C}\varepsilon^{\gamma},
\end{eqnarray}
where $\tilde{C}>0, 0<\gamma<1$ are constants depending on the a-priori data and on $\tilde{\theta}$ only.

Given $z\in \mathbb{R}^n, \xi\in \mathbb{R}^n, |\xi|=1, \theta>0, r>0$, we shall denote by
\begin{eqnarray}
C(z,\xi,\theta,r)= \{x\in \mathbb{R}^n\ :\ \frac{(x-z)\cdot \xi}{|x-z|}>\cos(\theta), \ |x-z|<r \}
\end{eqnarray}
the intersection of the ball $B_r(z)$ and the open cone having vertex $z$, axis in the direction
$\xi$ and width $2\theta$. Since $\partial G$ is of Lipschitz class with constant $r_0, L$ for any
$z\in \partial G$ there exists $\xi\in \mathbb{R}^n$, $|\xi|=1$, such that $C(z,\xi, \theta,r_0)\subset G$,
where $\theta=\arctan\frac{1}{L}$.

Let $(\bar{x},\bar{t})\in \partial G$ be such that
$|\nabla w(\bar{x}, \bar{t})|=\|\nabla w(\cdot,\bar{t})\|_{L^{\infty}(\partial G)}$.
Now dealing as in Proposition 5.4. \cite{Ca-Ro-Ve},
we combine the inequality \eqref{CauchyStabilityEstimate}
with an iterated use of the two--sphere and one--cylinder inequality (Theorem \ref{tre-teo})
within the cone $C(\bar{x}, \xi, \theta, r_0)$ obtaining the following estimate
\begin{eqnarray}
\label{uniquecontinuationcone}
\|w(\cdot,\bar{t})\|^2_{L^2(B_{\rho_{k(r)}}(x_{k(r)}))}
\leq C\left(1+\frac{T^2}{\rho^4_{k(r)}} \right)^{1-\bar{\tau}^{k(r)-1}}
\varepsilon^{\beta_1\bar{\tau}^{k(r)-1}},
\end{eqnarray}
with
\begin{eqnarray*}
&&\rho_0= a_1\sin\theta,\qquad
\rho_k= \chi\rho_{k-1},\qquad
d_1= a_1(1-\sin\theta),\\
&&\frac{|\log\frac{r}{d_1}|}{|\log \chi|}\leq k(r)-1\le \frac{|\log\frac{r}{d_1}|}{|\log \chi|}+1,
\end{eqnarray*}
where  $0<\bar{\tau}<1, 0<\beta_1<1, 0<\chi<1, a_1>0$ are positive constants depending on the a-priori
data only and where $x_{k(r)}$
is a point lying on the axis $\xi$ of the cone $C(\bar{x}, \xi, \theta, r_0)$
at a distance $\chi^{k(r)-1}\cdot d_1 + \rho_{k(r)}$ from $\bar{x}$ with $0<r<d_1$.
By the interpolation inequality \eqref{106} stated in Proposition \ref{cauchy-data} and the definition of $\rho_{k(r)}$
we have that \eqref{uniquecontinuationcone} leads to
\begin{eqnarray}
\label{uniquecontinuationconegradient}
\|\nabla w(\cdot,\bar{t})\|_{L^{\infty}(B_{\rho_{k(r)}}(x_{k(r)}))}
\leq \frac{C}{r_0}\chi^{\beta_3(1-k(r))}\varepsilon^{\beta_2\bar{\tau}^{k(r)-1}},
\end{eqnarray}
where $C>0, 0<\beta_2<1, 0<\beta_3<1$ are constants depending on the a-priori data only.
We consider the point $x_r=\bar{x}+r\xi$. We have that $x_r\in B_{\rho_{k(r)}}(x_{k(r)})$.
From \eqref{uniquecontinuationconegradient} and from the $C^{1,\alpha}$ regularity of $w$ we have that
\begin{eqnarray}
|\nabla w (\bar{x}, \bar{t})|\leq\frac{C}{r_0} \left(\left(\frac{r}{r_0}\right)^{\alpha} + \chi^{\beta_3(1-k(r))}
\varepsilon^{\beta_2\bar{\tau}^{k(r)-1}} \right),
\end{eqnarray}
where $C>0$ depends on the a-priori data only. Minimizing with respect to $r$ we obtain the desired estimate.
\end{proof}
Let us consider now the proofs of Propositions \ref{lemma-lower} and \ref{stimadalbasso}.
For this purpose we need a Harnack inequality, its version at the boundary and a technical lemma
(see Lemma \ref{tech} below).

The first tool can be found in \cite{Mo}. We state a Harnack inequality at the boundary,
postponing its proof to the next Section \ref{sec5}. Let us remark here that the thesis
holds true weakening the regularity assumptions on the boundary ($C^{0,1}$
instead of $C^{1,\alpha}$), on $\gamma$
and considering operators of more general form such as
$\mathrm{div (a(x,t)\nabla u)-u_t}$, where $a$ is bounded and satisfies a uniformly ellipticity condition.

\begin{proposition}[Harnack inequality at the boundary]
\label{harnack-lemma}
Let $\Omega$ be a bounded domain with $C^{1,\alpha}$ boundary with constants $r_0, L$.
Let $\gamma$ satisfying \eqref{gamma} be such that there exists a positive constant $\overline \gamma$
such that $\gamma\leq\overline\gamma$.
Let us denote by $T_1,T_2$ two numbers in the time interval $[0,T]$.
Let $u\in C^{1,\alpha}(\overline\Omega\times[0,T])$ be a positive solution to
$$\left\{\begin{array}{ll}
u_t-\Delta u=0 & \textrm{in }\Omega\times[T_1,T_2]\\[2mm]
\frac{\partial u}{\partial\nu}+\gamma u=0 & \textrm{on }\Gamma\times[T_1,T_2],
\end{array}\right.$$
where $\Gamma$ is an open portion compactly contained in $\partial\Omega$.
Assume $T_1<t_1<t_2<t_3<t_4\leq T_2$.
Then for $\rho<\rho_0$ there exists a positive constant $C$ depending on $\rho_0,\rho,t_1,t_2,t_3,t_4$
such that
\begin{equation}
\label{harnack}
\sup_{(B_\rho\cap\Omega)\times[t_1,t_2]}u\leq C\inf_{(B_\rho\cap\Omega)\times[t_3,t_4]}u.
\end{equation}
\end{proposition}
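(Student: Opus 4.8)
The plan is to flatten the boundary and reduce to a standard parabolic Harnack estimate for a degenerate-free problem with a conormal (Robin) boundary condition, then patch the local estimates along a time chain. First I would cover $\Gamma$ (or rather the relevant portion meeting $B_\rho$) by finitely many coordinate cylinders: for each boundary point $P$ there is, by the $C^{1,\alpha}$ assumption with constants $r_0,L$, a rigid motion taking $P$ to the origin and flattening $\partial\Omega\cap B_{r_0}$ to $\{x_n=\varphi(x')\}$ with $\varphi(0)=|\nabla\varphi(0)|=0$ and $\|\varphi\|_{C^{1,\alpha}}\le Lr_0$. Applying the change of variables $y'=x'$, $y_n=x_n-\varphi(x')$ straightens the boundary piece to $\{y_n=0\}$; the heat operator $u_t-\Delta u$ becomes a uniformly parabolic operator $u_t-\mathrm{div}(a(y)\nabla u)$ with Lipschitz (indeed $C^{0,\alpha}$) coefficients whose ellipticity constants depend only on $L$, and the condition $\partial u/\partial\nu+\gamma u=0$ becomes a conormal condition $a\nabla u\cdot e_n + \tilde\gamma u=0$ on $\{y_n=0\}$ with $0\le\tilde\gamma\le C(L)\overline\gamma$.

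The next step is to invoke the boundary Harnack inequality for positive solutions of such parabolic equations with a bounded nonnegative Robin coefficient. The standard trick is to even-reflect across $\{y_n=0\}$ — but here the Robin term obstructs a clean reflection, so instead I would absorb the Robin condition: write $v=e^{-\psi(y)}u$ (or use that $u$ is a subsolution of the purely Neumann problem and a supersolution up to a controlled lower-order term), or more directly cite the De Giorgi--Nash--Moser machinery as in Moser \cite{Mo,Mo2} adapted to the conormal boundary condition, which gives a local boundary Harnack inequality: for a half-cylinder $Q^+$ touching $\{y_n=0\}$ and a concentric smaller one $Q'^+$, $\sup_{Q'^+_-} v \le C\inf_{Q'^+_+} v$, where the sup is over an earlier time slab and the inf over a later one, with $C$ depending only on $n$, the ellipticity (hence $L$), the bound on $\tilde\gamma$ (hence $\overline\gamma$), and the geometry of the cylinders. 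Combining the boundary estimate near $\Gamma$ with the classical interior parabolic Harnack inequality (for cylinders compactly contained in $\Omega$) via a Harnack chain that advances in time — this is the iterated application of interior Harnack mentioned in item i) of the introduction — yields $\sup_{(B_\rho\cap\Omega)\times[t_1,t_2]}u\le C\inf_{(B_\rho\cap\Omega)\times[t_3,t_4]}u$ once $t_2<t_3$ and $\rho<\rho_0$, with $C$ depending on the stated data.

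The main obstacle is handling the Robin term in the boundary Harnack step: unlike the Neumann case one cannot simply reflect, and one must check that a bounded nonnegative zeroth-order boundary coefficient does not destroy the two-sided Harnack bound. I expect this to be dealt with either by the substitution $u\mapsto u\,e^{\gamma\,d(x)}$ (with $d$ a regularized distance as in the proof of Proposition \ref{cauchy-data}) to convert the Robin condition into a homogeneous Neumann condition modulo a bounded lower-order term in the interior — which interior Harnack tolerates — or by a direct Moser-iteration argument on the half-cylinder with the Robin term treated as a favorable (sign-definite) boundary contribution in the energy estimates, so that the subsolution and supersolution estimates of the Moser scheme go through with constants depending only on $\overline\gamma$ in addition to $n$ and $L$. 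A secondary technical point is uniformity: the constant must not degenerate as $\Gamma$ approaches $\partial\Omega$, which is ensured by the compact containment hypothesis $\Gamma\Subset\partial\Omega$ together with the finite covering. Carrying out the details of these reductions, and verifying the time-chaining, will be postponed to Section \ref{sec5}.
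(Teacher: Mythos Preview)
Your overall strategy coincides with the paper's: flatten the boundary near $P\in\Gamma$ via $\Psi(y)=(y',\varphi(y')+y_n)$, obtain a uniformly parabolic divergence-form equation $\mathrm{div}(\sigma(y)\nabla\tilde u)=\tilde u_t$ with a conormal Robin condition on $\{y_n=0\}$, and then run Moser's scheme directly on the half-cylinder. Where you hedge --- how the Robin term is actually absorbed in the Moser iteration --- the paper uses neither of your two suggestions. The substitution $u\mapsto ue^{\gamma d}$ is not used, and the ``favorable sign'' idea would only help for one range of exponents (with $v=u^{p/2}$ the boundary contribution is $-\tfrac{p}{4}\int\gamma_0 v^2\psi^2$, whose sign flips between the sub- and supersolution halves of the iteration). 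Instead the paper bounds the boundary integral in absolute value by the trace inequality
\[
\Bigl|\int_{\{y_n=0\}}\gamma_0\,v^2\psi^2\Bigr|\le C\overline\gamma\int_{B^+}\bigl|\nabla(v^2\psi^2)\bigr|,
\]
and then absorbs the resulting gradient term by Young's inequality with a small parameter; this works uniformly in $p$ and needs only $|\gamma|\le\overline\gamma$. With this single modification, Moser's two lemmas (the $L^p$--sup bound and the measure estimate for $\log u$) go through verbatim on the half-cylinder, so after scaling one gets \eqref{harnack} directly --- no finite covering of $\Gamma$ and no time chain are needed in this proof (the Harnack-chain argument you describe is what the paper uses later, in Proposition~\ref{lemma-lower}, not here).
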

In order to state next result, let us introduce the following notation.
We shall denote by $b_0,b_1$ two positive constants such that
$$b_0\leq u(x,t)\leq b_1\qquad \forall \ (x,t)\in\overline\Omega\times[t_1,T],$$
(by \eqref{(1)-1} we can take $b_0=c_0\Phi_1$,
whereas the existence of $b_1$ is guaranteed by \eqref{phi001} and \eqref{reg-lib}).

\begin{lemma}
\label{tech}
Let the hypothesis of Theorem \ref{main-theor} be satisfied. We have that
\begin{eqnarray}\label{lowerNeumann}
&&\int_{\Omega}\lambda^2(x,t)dx + b_0^2\int_{t_1}^t\int_{\Omega}|\nabla \lambda(x,t)|^2dxdt\nonumber\\
&\leq& C_0b_0^2(1+ e^{b_0^2tr_0^{-2}})
\int_{t_1}^t\int_{\partial\Omega}|u^2(x,\tau)\partial_{\nu}\lambda(x,\tau)|^2
d\sigma d\tau, \quad \forall\,t\in [t_1,T]
\end{eqnarray}
where $C_0>0$ is a constant depending on the a-priori data only.
\end{lemma}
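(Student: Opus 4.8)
The statement is an energy identity for the auxiliary function $\lambda$, turned into an inequality via Gronwall. The natural approach is to test the parabolic equation $\partial_t\lambda=\mathrm{div}(u^2\nabla\lambda)$ with $\lambda$ itself and integrate over $\Omega\times(t_1,t)$. Multiplying by $\lambda$ and integrating by parts in space gives
\begin{equation*}
\frac12\frac{d}{d\tau}\int_\Omega\lambda^2\,dx + \int_\Omega u^2|\nabla\lambda|^2\,dx
= \int_{\partial\Omega} u^2\,\partial_\nu\lambda\,\lambda\,d\sigma.
\end{equation*}
Using the lower bound $u\ge b_0$ from Proposition \ref{lemma-lower} on the gradient term, integrating in time from $t_1$ to $t$, and recalling $\lambda(\cdot,t_1)=0$, one obtains
\begin{equation*}
\frac12\int_\Omega\lambda^2(x,t)\,dx + b_0^2\int_{t_1}^t\!\!\int_\Omega|\nabla\lambda|^2\,dx\,d\tau
\le \int_{t_1}^t\!\!\int_{\partial\Omega} u^2\,\partial_\nu\lambda\,\lambda\,d\sigma\,d\tau.
\end{equation*}
Note that on $I$ the boundary term vanishes because of the homogeneous Neumann condition $u^2\partial_\nu\lambda=0$ in \eqref{eq-lambda}, so effectively the boundary integral is over $A$ only; but for the estimate it is harmless to keep it over all of $\partial\Omega$.

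The remaining task is to absorb the boundary term. The plan is to bound it by Cauchy–Schwarz on $\partial\Omega$, writing
$$\int_{\partial\Omega} u^2\partial_\nu\lambda\,\lambda\,d\sigma
\le \left(\int_{\partial\Omega}|u^2\partial_\nu\lambda|^2d\sigma\right)^{1/2}
\left(\int_{\partial\Omega}\lambda^2 d\sigma\right)^{1/2},$$
then control $\int_{\partial\Omega}\lambda^2\,d\sigma$ by the bulk quantities via a trace inequality of the form $\int_{\partial\Omega}\lambda^2\,d\sigma \le \epsilon\int_\Omega|\nabla\lambda|^2\,dx + C(\epsilon)\int_\Omega\lambda^2\,dx$, with $\epsilon$ chosen (proportionally to $b_0^2$) so that the gradient contribution is absorbed into the left-hand side. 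After a further Young's inequality to split the product of the square roots, one is left with an inequality of the shape
\begin{equation*}
\int_\Omega\lambda^2(x,t)\,dx + b_0^2\int_{t_1}^t\!\!\int_\Omega|\nabla\lambda|^2
\le C_0\int_{t_1}^t\!\!\int_{\partial\Omega}|u^2\partial_\nu\lambda|^2\,d\sigma\,d\tau
+ \frac{C_1}{b_0^2}\int_{t_1}^t\!\!\int_\Omega\lambda^2(x,\tau)\,dx\,d\tau,
\end{equation*}
where the factor $b_0^{-2}$ on the last term arises precisely from the $C(\epsilon)\sim b_0^{-2}$ in the trace inequality. Applying Gronwall's lemma in the variable $t$ then produces the exponential factor $e^{b_0^2 t r_0^{-2}}$ — up to tracking the dimensionally correct power of $r_0$, which enters through the normalized trace constant — and yields \eqref{lowerNeumann}.

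The main obstacle I anticipate is bookkeeping rather than conceptual: getting the dependence of the constants exactly as stated, in particular the appearance of $b_0^2$ multiplying $t r_0^{-2}$ in the exponent and the prefactor $C_0 b_0^2(1+e^{b_0^2 t r_0^{-2}})$. This forces one to be careful about how $b_0$ is carried through the trace inequality and the Gronwall step: the coefficient of the absorbed $L^2$-in-space term must come out as a constant times $b_0^{-2}$ (so that Gronwall gives a rate $\sim b_0^2$ after multiplying back), and the geometric/trace constant must be the dimensionless one depending only on $L$ and $M$, with all powers of $r_0$ displayed explicitly using the normalized norms from the Remark in Section \ref{sec2}. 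One also needs the $C^{1,\alpha}$ regularity of $u$ and $\lambda$ (from \eqref{reg-lib} and \eqref{54}) to justify all integrations by parts and the finiteness of every term, and the Lipschitz regularity of $\partial\Omega$ for the trace inequality to hold with constants depending only on the a priori data.
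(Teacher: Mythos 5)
Your proposal is correct and follows essentially the same route as the paper: test \eqref{eq-lambda} with $\lambda$, apply Cauchy--Schwarz to the boundary term, use the trace inequality to convert $\int_{\partial\Omega}\lambda^2\,d\sigma$ into bulk quantities, absorb the gradient contribution into the left-hand side, and conclude by Gronwall. The only place your bookkeeping goes astray is the direction of the $b_0$-dependence: the paper keeps the trace constant fixed (of order $r_0^{\pm 1/2}$, depending only on $L$) and puts the entire weight $\delta=\tfrac12 b_0^2$ into the Young step, so the coefficient of $\int_{t_1}^t\int_\Omega\lambda^2$ comes out as $b_0^2 r_0^{-2}$ rather than the $C_1 b_0^{-2}$ in your last display, which is exactly what Gronwall needs to produce the exponent $e^{b_0^2 t r_0^{-2}}$ of the statement.
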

\begin{proof}
For a sake of brevity, we shall denote along the proof $h=u^2\partial_{\nu}\lambda$.
By the weak formulation of problem \eqref{eq-lambda} we obtain that
\begin{eqnarray}
&&\frac{1}{2}\int_{t_1}^t\int_{\Omega}(\lambda^2)_{\tau}dxd\tau+
\int_{t_1}^{t}\int_{\Omega}u^2|\nabla\lambda|^2dx d\tau\nonumber\\
&=&\int_{t_1}^t\int_{\partial \Omega}h\lambda d\sigma d\tau.
\end{eqnarray}
By the H\"{o}lder inequality, we get
\begin{eqnarray}
\label{parts}
&&\frac{1}{2}\int_{\Omega}\lambda^2dx+
\int_{t_1}^{t}\int_{\Omega}u^2|\nabla\lambda|^2dxd\tau\nonumber\\
&\leq&
\left(\int_{t_1}^t\int_{\partial \Omega}h^2 d\sigma d\tau\right)^{\frac{1}{2}}
\left(\int_{t_1}^t\int_{\partial \Omega}\lambda^2 d\sigma d\tau\right)^{\frac{1}{2}}.
\end{eqnarray}
We recall the following trace inequality (see \cite{Li})
\begin{eqnarray}
\label{trace}
\|\lambda(\cdot,t)\|_{L^2(\partial \Omega)}\leq
C \left(r_0^{-1/2}\|\lambda(\cdot,t)\|_{L^2(\Omega)}
+r_0^{1/2}\|\nabla \lambda(\cdot, t)\|_{L^2(\Omega)}\right)
\end{eqnarray}
where $C>0$ is a constant depending on $r_0$ and $L$ only.
Raising to the square the latter and integrating over the interval $[t_1,t] $ we have that
\begin{eqnarray*}
\int_{t_1}^t\int_{\partial\Omega}\lambda^2d\sigma d\tau
\leq C \left(r_0^{-1}\int_{t_1}^t\int_{\Omega}\lambda^2dxd\tau
+r_0\int_{t_1}^t\int_{\Omega}|\nabla\lambda|^2dxd\tau \right).
\end{eqnarray*}
Plugging the above estimate in the right hand side of \eqref{parts} and using the Young inequality we get
\begin{eqnarray*}
&&\frac{1}{2}\int_{\Omega}\lambda^2dx + b_0^2\int_{t_1}^t\int_{\Omega}|\nabla \lambda|^2dxd\tau\nonumber\\
&\leq&\frac{Cr_0}{\delta}\int_{t_1}^t\int_{\partial \Omega}h^2d\sigma d\tau
+\delta r_0^{-2}\int_{t_1}^{t}\int_\Omega\lambda^2dx d\tau
+\delta\int_{t_1}^t\int_{\Omega}|\nabla\lambda|^2dx d\tau, \quad \forall \ t\in [t_1,T].
 \end{eqnarray*}
Choosing $\delta=\frac{1}{2}b_0^2$ we obtain that
\begin{eqnarray}
\label{intermediate}
&&\int_{\Omega}\lambda^2dx\\
&\leq&
Cr_0b_0^2\int_{t_1}^t\int_{\partial \Omega}h^2d\sigma d\tau
+ b_0^2r_0^{-2}\int_{t_1}^{t}\int_\Omega\lambda^2dx d\tau, \quad\forall\,t\in[t_1,T].\nonumber
\end{eqnarray}
Moreover, by the Gronwall inequality we infer that
\begin{eqnarray}
\label{gronwall}
\int_{\Omega}\lambda^2dx \leq
Cr_0b_0^2 e^{b_0^2tr_0^{-2}}\int_{t_1}^t\int_{\partial \Omega}h^2d\sigma d\tau.
\end{eqnarray}
Finally, integrating \eqref{gronwall} over the interval $[t_1,t]$ and combining the obtained inequality
with \eqref{intermediate} we get the desired estimate \eqref{lowerNeumann}.
\end{proof}

\begin{proof}[Proof of Proposition \ref{lemma-lower}]
Let $(x_0,t_0)\in\overline\Omega\times[t_1,T]$ be such that
$$u(x_0,t_0)=\min_{(x,t)\in\overline\Omega\times[t_1,T]}u(x,t).$$
By maximum principle
$(x_0,t_0)\in(\overline I\times[t_1,T])\cup(\overline A\times[t_1,T])\cup(\Omega\times\{t_1\})$.
Let us consider separately the three pieces of the boundary.

i) $(x_0,t_0)\in \bar A\times[t_1,T]$.
Being $\partial_\nu u(\cdot,t_0)\in C^\alpha(\partial\Omega)$ and
$\partial_\nu u(\cdot,t_0)=g(\cdot,t_0)$ on $A$, we have that $\partial_\nu u(\cdot,t_0)\geq0$
on $\bar A$, that contradicts Hopf maximum principle. Thus we can conclude that $(x_0,t_0)$
doesn't belong to $\bar A\times[t_1,T]$.

ii) $(x_0,t_0)\in \overline I\times[t_1,T]$.
First we fix $y_0\in A$ and $y_1=y_0-\frac{r_0}{2}\nu(y_0)$.
Without loss of generality, assume $t_0>t_1$. We divide the interval
$\left[\frac{t_0+t_1}{2},t_0\right]$ into $N$ subintervals $[\tilde t_i,\tilde t_{i+1}]$,
$i=1,\dots,N$, where $\tilde t_N=\frac{t_0+t_1}{2}$ and $\tilde t_0=t_0$.
We shall quantify $N$ later on. By Harnack inequality at the boundary (Proposition \ref{harnack-lemma}) we have
\begin{equation}
\label{1)-5}
\inf_{B_\rho(x_0)\cap\Omega}u(x,t_0)\geq C \sup_{B_\rho(x_0)\cap\Omega}u(x,\tilde t_1),
\end{equation}
where $\rho$ is such that $\rho\leq 4 r_0$. There exists $x_1\in B_\rho(x_0)\cap\Omega$ such that
$B_{\rho/8}(x_1)\subset B_\rho(x_0)\cap\Omega$. Thus
$$\sup_{B_\rho(x_0)\cap\Omega}u(x,\tilde t_1)\geq \sup_{B_{\rho/8}(x_1)}u(x,\tilde t_1).$$
We denote be $\sigma$ a continuous path joining $y_1$ and $x_1$ and define $x_i$, $i=1,\dots,N$, as follows
$x_{i+1}=\sigma(s_i)$, where $s_i=\max\{s\,:\,|\sigma(s)-x_i|=\rho/8\}$ if $|y_1-x_i|>\rho/8$,
otherwise $i=N$ and stop the process.
Trivially
$$\sup_{B_{\rho/8}(x_1)}u(x,\tilde t_1)\geq \inf_{B_{3\rho/4}(x_1)}u(x,\tilde t_1).$$
By Harnack inequality in the interior \cite{Mo} we have
$$\inf_{B_{3\rho/4}(x_1)}u(x,\tilde t_1)\geq C\sup_{B_{3\rho/4}(x_1)}u(x,\tilde t_2).$$
Then
$$\sup_{B_{3\rho/4}(x_1)}u(x,\tilde t_2)\geq \sup_{B_{\rho/8}(x_2)}u(x,\tilde t_2).$$
Summarizing we get
\begin{equation}
\label{1)-6}
\sup_{B_{\rho/8}(x_1)}u(x,\tilde t_1)\geq C\sup_{B_{\rho/8}(x_2)}u(x,\tilde t_2),
\end{equation}
where $C$ depends on $t_0,t_1$.

Iterating this process along a chain of balls $\{B_{\rho/8}(x_i)\}_{i=1}^N$, we obtain the estimate
\begin{equation}
\label{1)-9}
\sup_{B_{\rho/8}(x_1)}u(x,\tilde t_1)\geq C^N\sup_{B_{\rho/8}(x_N)}u(x,\tilde t_N)
\geq C^N u(y_1,\tilde t_N).
\end{equation}
By Taylor formula, recalling that $u>0$ and by \eqref{reg-lib}, \eqref{039}, we have that
\begin{eqnarray*}
u(y_1,\tilde t_N)&\geq& g(y_0,\tilde t_N)\frac{\rho}{8}-
C r_0^{-\alpha}\|g\|_{C^{0,\alpha}(A\times[0,T])}\left(\frac{\rho}{8}\right)^{1+\alpha}\\[2mm]
&\geq& \frac{\rho}{8r_0}\left(\Phi_1-CE\left(\frac{\rho}{8r_0}\right)^\alpha\right).
\end{eqnarray*}
Now choosing $\rho=\min\left\{4r_0,8r_0\left(\frac{\Phi_1}{2EC}\right)^{1/\alpha}\right\}$,
we get the thesis.

\noindent iii) $(x_0,t_0)\in\overline\Omega\times\{t_1\}$.
This case can be treated similarly as ii).
\end{proof}

\begin{proof}[Proof of Proposition \ref{stimadalbasso}]
In the sequel we shall maintain the notation $h=u^2\partial_{\nu}\lambda$.
After straightforward computation we observe that
\begin{eqnarray*}
\frac{\tilde{g}}{g}-1= \frac{u}{g}\partial_{\nu}\lambda+\lambda,
\end{eqnarray*}
from this identity and by \eqref{reg-lib} we get
\begin{eqnarray}
\label{int}
\left\|\frac{\tilde{g}}{g}-1\right \|_{L^2(A\times [t_1,T])}
\leq \frac{1}{C_0b_1}\|h\|_{L^2(A\times [t_1,T])}
+\|\lambda \|_{L^2(\partial \Omega\times[t_1,T])}
\end{eqnarray}
where $C_0$ has been introduced in \eqref{reg-lib}.
Now, by integrating the trace estimate \eqref{trace} over the time interval $[t_1,T]$ and by \eqref{lowerNeumann} we get
\begin{eqnarray}
\label{comb}
&&\int_{t_1}^T\int_{\partial\Omega}\lambda^2d\sigma d\tau\leq C\left(r_0^{-1}\int_{t_1}^T\int_{\Omega}\lambda^2dxd\tau+
r_0\int_{t_1}^t\int_{\Omega}|\nabla\lambda|^2dxd\tau \right)\nonumber\\
&\leq&CC_0(1+e^{b_0^2r_0^{-2}t})(1+ b_0^2r_0^{-2}t)\int_{t_1}^T\int_{A}h^2d\sigma d\tau.
\end{eqnarray}
At this stage we use the above inequality to control the right hand side of \eqref{int} obtaining the following
\begin{eqnarray}
\label{first}
\left\|\frac{\tilde{g}}{g}-1\right\|_{L^2(A\times [t_1,T])}
\leq C_1 \|h \|_{L^2(A\times[t_1,T])},
\end{eqnarray}
where $C_1=\left( \frac{1}{C_0b_1}+[CC_0(1+e^{b_0^2r_0^{-2}T})(1+ b_0^2r_0^{-2}T)]^{\frac{1}{2}}\right)$.
Recalling that for every $c\in \mathbb{R}$ we have
\begin{eqnarray*}
\left\|\frac{\tilde{g}}{g}-c\right\|_{L^2(A\times [t_1,T])}
\geq\left\|\frac{\tilde{g}}{g}-\left(\frac{\tilde{g}}{g}\right)_{A\times [0,T]}\right \|_{L^2(A\times [0,T])}
\geq\Phi_0,
\end{eqnarray*}
from \eqref{first} we infer that
\begin{eqnarray*}
\left\|\frac{\tilde{g}}{g}-\left(\frac{\tilde{g}}{g}\right)_{A\times [0,T]}\right\|_{L^2(A\times [0,T])}
\leq C_1\|h\|_{L^2(A\times[t_1,T])}.
\end{eqnarray*}
At this point we claim that for any $\rho>0$ and for any $x_0\in\Omega_{\rho}$ it holds
\begin{eqnarray*}
\|h\|^2_{L^2(A\times[t_1,T])}
\leq \frac{C}{r_0^{n+2}}\int_{t_1}^{T}\int_{B_{\rho}(x_0)}\lambda^2 dxd\tau,
\end{eqnarray*}
where $C>0$ is a constant depending on the a-priori data and on $\rho$ only.
Our claim, now, follows by standard arguments based on Theorem \ref{tre-teo} and the corresponding version
in the interior (see \cite[Proposition 4.1.3]{Ve2}).
\end{proof}

\section{Proof of Proposition \ref{harnack-lemma} (Harnack Inequality at the Boundary)}
\label{sec5}
The proof of Proposition \ref{harnack-lemma} can be obtained as in \cite{Mo}, where
the result relies on two Lemmas labeled as Lemma 1 and Lemma 2. For the sake of completeness, we state and sketch the proof
of them in the present situation in the two following lemmas.
For $r>0$, we shall denote by $S(r)$ the cylinder $|t|<r^2$, $|x|<r$, by $S^-(r)$
the cylinder $0<-t<r^2$, $|x|<r$ and by $S^+(r)$
the cylinder $0<t<r^2$, $|x|<r$.
We denote also $B_r^+=B_r\cap\{x_n>0\}$.
\begin{lemma}
\label{lemma1}
Let $u(x,t)>0$ be a solution of the problem
\begin{equation}
\label{33}
\left\{\begin{array}{ll}
\mathrm{div}(\sigma (x)\nabla u(x,t))=u_{t}(x,t), & x\in B^{+}_2,|t|\leq\overline t,\,\overline t>1,\\[2mm]
\sigma (x)\nabla u(x,t)\cdot \nu (x)+\gamma_0 (x,t)\text{ }u(x,t)=0, &
\textrm{for }\left\vert x\right\vert <2,\,x_{n}=0,|t|\leq\overline t,
\end{array}\right.
\end{equation}
where
\begin{equation}
\label{33b}
\lambda \left\vert \xi \right\vert ^{2}\leq \sigma (x)\xi \cdot \xi \leq
\Lambda \left\vert \xi \right\vert ^{2},
\end{equation}
for every $x\in B^{+}_2$, $\xi \in \mathbb{R}^{n}$ and
$\left\vert \gamma_0(x,t)\right\vert \leq \overline{\gamma}_0$.
Let $\frac{1}{2}\leq \rho <r\leq1 $ and $\mu =\Lambda +\frac{1}{\lambda }$.
Then there exists a constant $C_4=C_{4}(n,\overline{\gamma}_0)$ such that
\begin{eqnarray*}
&&\underset{S(\rho )}{\sup}\,u^{p}\leq \frac{C_{4}}{(r-\rho )^{n+2}}
\iint_{S(r)}u^{p}dxdt,\textrm{ for every }0<p<\mu ^{-1},\\[2mm]
&&\underset{S^{-}(\rho )}{\sup }u^{p}\leq \frac{C_{4}}{(r-\rho )^{n+2}}
\iint_{S^{-}(r)}u^{p}dxdt,\textrm{ for every } -\mu^{-1}<p<0.
\end{eqnarray*}
\end{lemma}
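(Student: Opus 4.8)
The plan is to recognize this as the parabolic local boundedness estimate underlying Moser's proof of the Harnack inequality (\cite[Lemma 1]{Mo}, \cite{Mo2}), now with a flat piece $\{x_n=0\}$ of the boundary carrying a Robin condition, and to run the Moser iteration \emph{up to} that flat boundary, checking that the only genuinely new term — the boundary integral produced by the Robin condition — is absorbable. Throughout I read $S(r)$ and $S^-(r)$ (for $r\le 1$) as subsets of $\overline{B_2^+}\times\mathbb R$, so that for $\rho\ge 1/2$ these cylinders do meet $\{x_n=0\}$, which is the whole point of the estimate.

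First I would establish the Caccioppoli (energy) inequality. Fix an exponent $\beta$ in the range attached to $p$ and $\mu$, take a smooth cutoff $\eta$, $0\le\eta\le 1$, supported in $S(r)$ (resp. $S^-(r)$), equal to $1$ on $S(\rho)$ (resp. $S^-(\rho)$), with $|\nabla\eta|\le C(r-\rho)^{-1}$ and $|\partial_t\eta|\le C(r-\rho)^{-2}$, and test the weak form of \eqref{33} with $\phi=\eta^2 u^{2\beta-1}$ (after the usual truncation of $u$, removed at the end). Integrating by parts in $x$, the only boundary contribution that survives is over $\{x_n=0\}$, and by the Robin condition $\sigma\nabla u\cdot\nu=-\gamma_0 u$ it equals $-\iint_{\{x_n=0\}}\gamma_0\,\eta^2 u^{2\beta}$; using $|\gamma_0|\le\overline{\gamma}_0$ this is at most $\overline{\gamma}_0$ times the trace integral of $(\eta u^\beta)^2$. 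The key point is that this trace integral is absorbable: by a trace inequality on the half-ball applied to $w(\cdot,t)=\eta(\cdot,t)u^\beta(\cdot,t)$ and integrated in time it is bounded by $\varepsilon\iint|\nabla(\eta u^\beta)|^2+C(\varepsilon,n)\iint(\eta u^\beta)^2$, and choosing $\varepsilon$ small relative to the ellipticity constant $\lambda$ lets the gradient part be swallowed by the principal (Dirichlet) term on the left while the rest joins the lower-order terms on the right. This is precisely where the constant $C_4$ acquires its dependence on $n$ and $\overline{\gamma}_0$. After the absorption I would be left with the standard energy inequality, controlling $\sup_t\int\eta^2 u^{2\beta}\,dx+\iint|\nabla(\eta u^\beta)|^2\,dx\,dt$ by $C\iint(|\nabla\eta|^2+|\partial_t\eta^2|+\eta^2)\,u^{2\beta}\,dx\,dt$ with $C$ depending only on $n,\overline{\gamma}_0,\lambda,\Lambda,\beta$; the range of admissible $\beta$ — equivalently, after relabeling $p=2\beta$, the bounds $0<p<\mu^{-1}$ and $-\mu^{-1}<p<0$ — is exactly the condition, inherited unchanged from \cite{Mo}, under which those constants remain controlled in terms of the ellipticity data alone.

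From this point the argument is Moser's verbatim. I would use the parabolic Sobolev embedding $L^\infty_t L^2_x\cap L^2_t H^1_x\hookrightarrow L^{2(n+2)/n}_{x,t}$ to upgrade the integrability of $u^\beta$, iterate over a geometric chain of cylinders $S(\rho_k)\downarrow S(\rho)$ (resp. $S^-(\rho_k)\downarrow S^-(\rho)$) with cutoffs adapted to the successive shells, sum the telescoping constants (which converges because the exponents grow geometrically), and arrive at $\sup_{S(\rho)}u^{2\beta}\le C(r-\rho)^{-(n+2)}\iint_{S(r)}u^{2\beta}$; when the starting integrability exponent overshoots the target $p$, a standard reabsorption step (Young's inequality together with the usual iteration lemma for quantities on a continuum of nested cylinders) brings it back down to $p$. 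Reindexing yields the two displayed inequalities, the backward cylinders $S^-$ appearing in the negative-exponent case because there $u^p$ is a genuine subsolution, whose maximum is controlled only over the past. I expect the one truly new point — essentially the only departure from \cite{Mo} — to be the treatment of the Robin boundary term: one must keep the trace-inequality absorption quantitative and dimensionally homogeneous, with constants depending only on $n,\overline{\gamma}_0,\lambda,\Lambda$, and it is the flatness of the boundary piece $\{x_n=0\}$ (supplied by the local chart) that makes this clean. Everything else is routine Moser iteration, so for that I would simply refer to \cite{Mo, Mo2}.
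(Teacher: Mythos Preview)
Your proposal is correct and follows essentially the same approach as the paper: test with $\eta^2 u^{2\beta-1}$ (the paper writes $v=u^{p/2}$, $\Phi=u^{p-1}\psi^2$), identify the new Robin boundary integral $\int_{I(1)}\gamma_0 v^2\psi^2$, absorb it via a trace inequality into the interior energy terms, and then observe that the resulting Caccioppoli inequality differs from Moser's only by harmless lower-order terms, so that the iteration in \cite{Mo} goes through verbatim. The paper also splits explicitly into the cases $1-\tfrac{1}{p}>0$ and $1-\tfrac{1}{p}<0$, but this is the same bookkeeping you subsume under ``the range of admissible $\beta$.''
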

\begin{proof}
Let $\Phi(x,t)$ be a test function such that
\begin{equation*}
\left\{\begin{array}{l}
\Phi \in C^{1}(S(1)),\\
\Phi =0,\text{ for }(x,t)\in\partial B_1\times[-1,1].
\end{array}\right.
\end{equation*}
By \eqref{33}, integrating over
$B^{+}_1\times (t_{1},t_{2})$, $t_1,t_2\in(-1,1)$, and taking into
account the Robin condition, we get
\begin{eqnarray*}
&&\int_{t_{1}}^{t_{2}}\int_{B^{+}_1}u_{t}\Phi
+\int_{t_{1}}^{t_{2}}\int_{B^{+}_1}\sigma \nabla u\cdot\nabla\Phi\\
&=&\int_{t_{1}}^{t_{2}}\int_{B^{+}_1}\sigma \nabla u\cdot\nu\Phi=
-\int_{t_{1}}^{t_{2}}\int_{I(1)}\gamma_0(x,t)u\Phi,
\end{eqnarray*}
where $I(1)=\{(x\in\mathbb R^n\,:\,x=(x',0),\,|x|\leq1\}$.
Let us set $v=u^{\frac{p}{2}}$, $\Phi=u^{p-1}\psi ^{2}$.
Then we obtain,
\begin{eqnarray*}
&&\int_{t_{1}}^{t_{2}}\int_{B^{+}_1}\frac{2}{p}vv_{t}\psi^{2}
+\int_{t_{1}}^{t_{2}}\int_{B^{+}_1}\frac{4(p-1)}{p^{2}}\sigma \nabla v\cdot\nabla v\psi ^{2}\\
&&+\int_{t_{1}}^{t_{2}}\int_{B^{+}_1}\frac{4}{p}v\psi \sigma\nabla v\cdot\nabla \psi=
-\int_{t_{1}}^{t_{2}}\int_{I(1)}\gamma_0 v^{2}\psi^{2}.
\end{eqnarray*}
Multiplying for $\frac{p}{4}$ and adding to both sides the term
$\int_{t_{1}}^{t_{2}}\int_{B^{+}_1}\frac{1}{4}(\dfrac{d}{dt}\psi ^{2})v^{2},$ we get
\begin{eqnarray}
\label{34}
&&\frac{1}{4}\int_{t_{1}}^{t_{2}}\int_{B^{+}_1}\frac{d}{dt}(v^{2}\psi^{2})+(1-\frac{1}{p})\int_{t_{1}}^{t_{2}}
\int_{B^{+}_1}\sigma \nabla v\cdot\nabla v\psi ^{2}\\
&=&-\int_{t_{1}}^{t_{2}}\int_{B^{+}_1}v\psi\sigma \nabla v\cdot\nabla\psi+
\frac{1}{2}\int_{t_{1}}^{t_{2}}\int_{B^{+}_1}\psi \psi _{t}v^{2}-\frac{p}{4}
\int_{t_{1}}^{t_{2}}\int_{I(1)}\gamma_0 v^{2}\psi ^{2}.\nonumber
\end{eqnarray}
The term $\int_{t_{1}}^{t_{2}}\int_{I(1)}\gamma_0 v^{2}\psi ^{2}$
can be estimated by trace theorem (\cite[Th. 5.22]{Ad}) and Poincar\'{e} inequality, that is
\begin{equation}
\label{35}
\left\vert \int_{t_{1}}^{t_{2}}\int_{I(1)}\gamma_0 v^{2}\psi ^{2}\right\vert
\leq C\overline{\gamma }_0\int_{t_{1}}^{t_{2}}\int_{B^{+}_1}\left\vert \nabla(v^{2}\psi ^{2})\right\vert,
\end{equation}
where C is an absolute constant.
Then, by spreading the gradient of the right hand side of \eqref{35}
and by applying the inequality $2ab\leq a^2+b^2$, we get
\begin{eqnarray}
\label{36}
&&\left\vert\frac{p}{4}\int_{t_{1}}^{t_{2}}\int_{I(1)}\gamma_0 v^{2}\psi^{2}\right\vert
\leq C\overline{\gamma}_0\frac{\left\vert p\right\vert}{4}
\int_{t_{1}}^{t_{2}}\int_{B^{+}_1}\left\vert \nabla (v^{2}\psi^{2})\right\vert\\
&\leq& \tilde\varepsilon\int_{t_{1}}^{t_{2}}\int_{B^{+}_1}\left\vert \nabla v\right\vert ^{2}\psi ^{2}+
+\frac{1}{\tilde\varepsilon}\frac{C^{2}\overline{\gamma}_0^{2}
\left\vert p\right\vert ^{2}}{16}\int_{t_{1}}^{t_{2}}\int_{B^{+}_1}v^{2}\psi ^{2}\nonumber\\
&&+\frac{C\overline{\gamma}_0\left\vert p\right\vert}{4}
\int_{t_{1}}^{t_{2}}\int_{B^{+}_1}v^{2}(\psi^{2}+\left\vert \nabla \psi \right\vert ^{2}).\nonumber
\end{eqnarray}
We first consider the case $(1-\frac{1}{p})>0$.
By \eqref{33b}, \eqref{34}, \eqref{36} and by
\begin{equation}
\label{37}
\left\vert v\psi\sigma \nabla v\cdot\nabla \psi\right\vert
\leq \frac{1}{4\tilde\varepsilon}v^{2}\nabla\psi\cdot\sigma\nabla\psi
+\tilde\varepsilon\psi^{2}\nabla v\cdot\sigma\nabla v,
\end{equation}
we obtain, taking $\tilde\varepsilon=\frac{\lambda}{\lambda +1}\frac{1}{2}(1-\frac{1}{p})$,
\begin{eqnarray}
\label{39}
&&\frac{1}{4}\int_{t_{1}}^{t_{2}}\int_{B^{+}_1}\frac{d}{dt}(v^{2}\psi ^{2})
+\frac{\lambda }{2}(1-\frac{1}{p})\int_{t_{1}}^{t_{2}}\int_{B^{+}_1}\left\vert \nabla v\right\vert ^{2}\psi ^{2}\\
&\leq&\frac{1}{2}\frac{\left(\lambda +1\right)\Lambda}{\lambda}\frac{p}{p-1}
\int_{t_{1}}^{t_{2}}\int_{B^{+}_1}v^{2}\left\vert\nabla\psi\right\vert^{2}
+\frac{1}{2}\int_{t_{1}}^{t_{2}}\int_{B^{+}_1}v^{2}\left\vert\psi \psi _{t}\right\vert\nonumber\\
&&+C(\lambda ,\overline{\gamma}_0)\left(\frac{p^{3}}{p-1}+\left\vert p\right\vert\right)
\int_{t_{1}}^{t_{2}}\int_{B^{+}_1}v^{2}(\psi ^{2}+\left\vert \nabla \psi\right\vert ^{2})\nonumber.
\end{eqnarray}
In the case $(1-\frac{1}{p})<0$, we multiply \eqref{34} by $-1$.
By \eqref{33b}, \eqref{36}, \eqref{37},  choosing $\tilde\varepsilon=\frac{\lambda}{\lambda +1}\frac{1}{2}(\frac{1}{p}-1)$,
we obtain
\begin{eqnarray}
\label{40}
&&-\frac{1}{4}\int_{t_{1}}^{t_{2}}\int_{B^{+}_1}\dfrac{d}{dt}(v^{2}\psi ^{2})
+\frac{\lambda }{2}(\frac{1}{p}-1)\int_{t_{1}}^{t_{2}}\int_{B^{+}_1}\left\vert \nabla v\right\vert ^{2}\psi ^{2}\\
&\leq&\frac{1}{2}\frac{\left( \lambda +1\right)\Lambda }{\lambda }\frac{p}{1-p}
\int_{t_{1}}^{t_{2}}\int_{B^{+}_1}v^{2}\left\vert \nabla \psi \right\vert
^{2}+\frac{1}{2}\int_{t_{1}}^{t_{2}}\int_{B^{+}_1}v^{2}\left\vert\psi \psi _{t}\right\vert\nonumber\\
&&+C(\lambda ,\overline{\gamma}_0)\left(\frac{p^{3}}{1-p}+\left\vert p\right\vert\right)
\int_{t_{1}}^{t_{2}}\int_{B^{+}_1}v^{2}(\psi ^{2}+\left\vert\nabla\psi\right\vert^{2}).\nonumber
\end{eqnarray}
Inequalities \eqref{39}, \eqref{40} are analogous to the one in \cite[page 737]{Mo},
with the addition of the term
$C(\lambda ,\overline{\gamma}_0)(p^2\left|\frac{p}{1-p}\right|+\left\vert p\right\vert)
\int_{t_{1}}^{t_{2}}\int_{B^{+}_1}v^{2}(\psi^{2}+\left\vert \nabla \psi \right\vert ^{2})$.
Proceeding in the same way, we get the thesis.
\end{proof}

\begin{lemma}
\label{lemma2}
Let the hypothesis of Lemma \ref{lemma1} be fulfilled.
Then there exist constants $a$, $C_{5}$ such that
\begin{equation*}
|\{(x,t)\in S^{+}(1):\log u<-s+a\}|+
|\{(x,t)\in S^{-}(1):\log u>s+a\}|\leq \frac{C_{5}}{s},
\end{equation*}
for every $s>0$, where $C_{5}$ depends on $\lambda,\Lambda,n,\overline{\gamma}_0$,
and $a$ depends on $u$.
\end{lemma}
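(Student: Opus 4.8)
The plan is to follow Moser's argument for his Lemma~2, the only genuinely new feature being the boundary integral produced by the Robin condition, which turns out to enter only through an additive constant. Write $w=\log u$, which is legitimate since $u>0$ and $u\in C^{1,\alpha}$, so $1/u$ is a bounded admissible factor. Fix a spatial cut-off $\psi=\psi(x)$ with $\psi\equiv1$ on $B_1^+$, $\mathrm{supp}\,\psi\subset B_{3/2}^+$, $0\le\psi\le1$, $|\nabla\psi|\le c$, and insert $\phi=\psi^2/u$ into the weak formulation of \eqref{33}. Because the factor $u$ cancels, the boundary term on $\{x_n=0\}$ collapses to $-\int_{\{x_n=0\}}\gamma_0\psi^2$, whose absolute value is at most $\overline\gamma_0\|\psi^2\|_{L^1}$; this is the only place the Robin condition shows up, and it contributes just a constant (alternatively one may absorb it via the trace inequality exactly as in the proof of Lemma \ref{lemma1}). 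Integrating by parts and using \eqref{33b} and Young's inequality as in Lemma \ref{lemma1}, one gets for a.e. $t\in(-1,1)$
\[
\frac{d}{dt}\int_{B_{3/2}^+}\psi^2 w\,dx\ \ge\ \frac{\lambda}{2}\int_{B_{3/2}^+}\psi^2|\nabla w|^2\,dx-C_1,\qquad C_1=C_1(\lambda,\Lambda,n,\overline\gamma_0),
\]
the sign being the usual one: the weighted mean of $\log u$ is, up to a controlled source, a nondecreasing function of time.

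Next I would set $W(t)=\big(\int\psi^2\big)^{-1}\int_{B_{3/2}^+}\psi^2 w(x,t)\,dx$ and apply the (weighted) Poincar\'e inequality on the Lipschitz domain $B_{3/2}^+$, $\int\psi^2(w-W(t))^2\le C_2\int\psi^2|\nabla w|^2$, to upgrade the previous estimate to the Riccati-type inequality
\[
W'(t)+C_3\ \ge\ C_4\int_{B_{3/2}^+}\psi^2\big(w(x,t)-W(t)\big)^2\,dx\ \ge\ 0,\qquad t\in(-1,1),
\]
with $C_3,C_4$ depending on $\lambda,\Lambda,n,\overline\gamma_0$ only. In particular $t\mapsto W(t)+C_3t$ is nondecreasing, so choosing $a:=W(0)$ we get $W(t)\ge a-C_3$ on $(0,1)$ and $W(t)\le a+C_3$ on $(-1,0)$.

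To pass from here to the measure estimate I still need $W(1)-W(-1)$ bounded by a constant depending only on $\lambda,\Lambda,n,\overline\gamma_0$ — the monotonicity just bounds it below. For this I would redo the test-function computation with a space--time cut-off $\zeta=\zeta(x,t)$ equal to $1$ on $B_1^+\times[-1,1]$ and supported in $B_{3/2}^+\times(-\bar t',\bar t')$ with $1<\bar t'<\bar t$; this is where the hypothesis $\bar t>1$ of Lemma \ref{lemma1} is used. Since $\zeta$ vanishes at the time end-points, integration by parts in $t$ turns $\int\!\!\int\zeta^2 w_t$ into $\zeta_t$-terms which, after splitting $w=(w-\overline w(t))+\overline w(t)$ (with $\overline w(t)$ the mean of $w(\cdot,t)$ over $B_{3/2}^+$), together with the Robin term $\int_{\{x_n=0\}}\gamma_0\zeta^2$ are lower order and absorbed in the classical way; the result is the Caccioppoli-type bound $\int_{-1}^1\!\int_{B_1^+}|\nabla w|^2\,dx\,dt\le C_5'$. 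Feeding this into the exact identity for $W'(t)$ from the first step, integrated over $(-1,1)$, yields $|W(1)-W(-1)|\le C_6$ with $C_6=C_6(\lambda,\Lambda,n,\overline\gamma_0)$.

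Finally, fix $s>2C_3$. For $t\in(0,1)$, on $\{x\in B_1^+:w(x,t)<a-s\}$ one has $|w(x,t)-W(t)|>s-C_3>s/2$, hence, since $\psi\equiv1$ on $B_1^+$, the Riccati inequality gives $W'(t)+C_3\ge \tfrac{C_4 s^2}{4}\,\big|\{x\in B_1^+:w(x,t)<a-s\}\big|$; integrating over $t\in(0,1)$ and using $W(1)-W(0)\le C_6$ bounds $|\{(x,t)\in S^+(1):\log u<-s+a\}|$ by $C s^{-2}$. The symmetric computation on $t\in(-1,0)$, using $W(0)-W(-1)\le C_6$, bounds $|\{(x,t)\in S^-(1):\log u>s+a\}|$ by $Cs^{-2}$. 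Adding the two, and covering $0<s\le 2C_3$ by the trivial estimate $|S^+(1)|+|S^-(1)|$, gives the assertion with $C_5$ depending on $\lambda,\Lambda,n,\overline\gamma_0$ only. I expect the main obstacle to be not the Robin term (it is plainly benign, by the $u$-cancellation and $|\gamma_0|\le\overline\gamma_0$) but the slightly delicate, though classical, bookkeeping of the lower-order terms in the $\log u$ energy estimate of the third step.
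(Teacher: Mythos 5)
Your first two steps are correct and coincide with what the paper actually computes: testing with $\psi^{2}/u$ is the same as testing the equation \eqref{42} for $v=-\log u$ with a time\--independent $\psi^{2}$, the Robin term degenerates into $\int_{\{x_n=0\}}\gamma_0\psi^{2}$, which is bounded by a constant depending on $\overline\gamma_0$ alone (your direct bound is in fact slightly cleaner than the paper's trace\--plus\--Poincar\'e route, which is only genuinely needed in Lemma \ref{lemma1} where the boundary integrand carries a factor $v^{2}$), and one arrives at the paper's inequality \eqref{45}, i.e.\ your Riccati\--type inequality $W'(t)+C_3\ge C_4\int\psi^{2}(w-W)^{2}$.

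Your third step, however, is wrong, and the error propagates to the conclusion. The Caccioppoli\--type bound $\int_{-1}^{1}\int_{B_1^{+}}|\nabla \log u|^{2}\le C_5'$ with $C_5'$ depending only on $\lambda,\Lambda,n,\overline\gamma_0$ is false, and so is $|W(1)-W(-1)|\le C_6$: take $u(x,t)=e^{-Nx_1+N^{2}t}$, a positive caloric function satisfying the homogeneous Neumann condition on $\{x_n=0\}$, for which $|\nabla\log u|\equiv N$ and $W(1)-W(-1)=2N^{2}$. The precise point where your derivation breaks is the term $-\int\!\!\int \overline w(t)\,(\zeta^{2})_t\,dx\,dt=\int \overline w'(t)\bigl(\int\zeta^{2}dx\bigr)dt$, which carries no gradient and no smallness to absorb; in the example it equals $N^{2}\int\!\!\int\zeta^{2}$. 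Consequently the $Cs^{-2}$ bound you end with is also false: for the same family one checks that, for \emph{every} choice of $a$, the sum of the two measures at level $s=N/4$ is bounded below by $c/s$ with $c>0$ absolute, so the rate $1/s$ in the lemma is sharp and cannot be improved to $1/s^{2}$. The classical way out --- which is what the paper means by ``proceeding in the same way'' as \cite{Mo2}, p.~121 --- requires no control whatsoever on the total variation of $W$: normalize so that $\tilde W(t)=W(t)+C_3t$ is nondecreasing, set $a=W(0)$, note that on the relevant sub/superlevel set one has $|w-W(t)|\ge |\tilde W(t)-\tilde W(0)|+s/2$ for $s\ge 2C_3$, divide the differential inequality by $(|\tilde W(t)-\tilde W(0)|+s/2)^{2}$, and observe that the right\--hand side becomes the exact derivative of $\mp(|\tilde W(t)-\tilde W(0)|+s/2)^{-1}$; integrating in $t$ over $(0,1)$ and over $(-1,0)$ the telescoping yields at most $2/s$ in each case, however large $\tilde W$ becomes. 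Replace your third and fourth steps by this division trick; the first two steps can stand.
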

\begin{proof}
We consider the function $v=-\log u$
that solves the problem
\begin{equation}
\label{42}
\left\{\begin{array}{ll}
v_{t}-\mathrm{div}(\sigma \nabla v)=-\nabla v\cdot\sigma \nabla v, & x\in B^{+}_2,|t|<1, \\
\sigma \nabla v\cdot \nu=\gamma_0, & \left\vert x\right\vert <2,x_{n}=0,|t|<1.
\end{array}\right.
\end{equation}
Let $\psi^{2}(x)$ be a test function independent on $t$ and such that
$\psi(x)\geq 0$, $\psi(x)=0$ for $\left\vert x\right\vert =2$, $x_{n}>0$.
By \eqref{42}, we get, adding the null term $v\frac{d}{dt}\psi ^{2}$,
\begin{eqnarray}
\label{43}
&&\int_{t_{1}}^{t_{2}}\int_{B^{+}_2}\dfrac{d}{dt}(v\psi^{2})
+2\int_{t_{1}}^{t_{2}}\int_{B^{+}_2}\sigma \nabla v\cdot\nabla\psi\psi\nonumber\\
&&-\int_{t_{1}}^{t_{2}}\int_{I(2)}\gamma \psi^{2}
+\int_{t_{1}}^{t_{2}}\int_{B^{+}_2}\sigma\nabla v\cdot\nabla v\psi^{2}=0,
\end{eqnarray}
where $t_1,t_2\in (-1,1)$. By Schwarz inequality, since
\begin{equation*}
\int_{t_{1}}^{t_{2}}\int_{B^{+}_2}\sigma \nabla v\cdot\nabla \psi\psi
\geq-\frac{1}{4}\int_{t_{1}}^{t_{2}}\int_{B^{+}_2}\sigma \nabla v\cdot\nabla v\psi^{2}
-\int_{t_{1}}^{t_{2}}\int_{B^{+}_2}\sigma|\nabla\psi|^2,
\end{equation*}
and by \eqref{33b}, \eqref{43}, we get
\begin{eqnarray*}
&&\int_{B^{+}_2}v\psi ^{2}\left\vert _{t_{1}}^{t_{2}}\right. +\frac{\lambda}{2}
\int_{t_{1}}^{t_{2}}\int_{B^{+}_2}\left\vert \nabla v\right\vert^{2}\psi^{2}
\leq 2\int_{t_{1}}^{t_{2}}\int_{B^{+}_2}\sigma \nabla\psi\cdot\nabla\psi
+\int_{t_{1}}^{t_{2}}\int_{I(2)}\gamma_0 \psi ^{2}.
\end{eqnarray*}
Again by trace theorem (\cite[Th. 5.22]{Ad}) and Poincar\'{e} inequality we finally obtain
\begin{eqnarray}
\label{45}
&&\int_{B^{+}_2}v\psi ^{2}|_{t_{1}}^{t_{2}}+
\frac{\lambda }{2}\int_{t_{1}}^{t_{2}}\int_{B^{+}_2}\left\vert\nabla v\right\vert ^{2}
\psi^{2}\leq 2\int_{t_{1}}^{t_{2}}\int_{B^{+}_2}\sigma \nabla \psi\cdot\nabla\psi\nonumber\\
&&+2\overline{\gamma}_0\int_{t_{1}}^{t_{2}}\int_{B^{+}_2}|\psi\nabla \psi|
\leq C(\overline{\gamma}_0,\Lambda)\int_{t_{1}}^{t_{2}}\int_{B^{+}_2}(\psi ^{2}+\left\vert \nabla \psi\right\vert ^{2}).
\end{eqnarray}
Inequality \eqref{45} is analogous to the inequality in \cite{Mo2}, page 121, with the
addition of the term
$C(\overline{\gamma}_0,\Lambda)\int_{t_{1}}^{t_{2}}\int_{B^{+}_2}(\psi ^{2}+\left\vert \nabla \psi\right\vert ^{2})$.
Proceeding in the same way, we get the thesis.
\end{proof}
\noindent Let $P\in\Gamma$.
Owing to the boundary regularity of $\Omega$, there exists a rigid
transformation of coordinates such that $P\equiv0$ and
$$\Omega\cap B_{r_0}=\{x\in\mathbb R^n\,:\, x_n>\varphi(x')\},$$
where $\varphi\in C^{1,\alpha}(B'_{r_0})$ satisfies
$$\varphi (0)=\left\vert \nabla \varphi (0)\right\vert=0,\qquad
\left\Vert\varphi\right\Vert_{C^{1,\alpha}(B'_{r_0})}\leq Lr_{0}.$$
Defining the map $\Psi$ on $B_{r_2}$ as
$$\Psi(y)=(y',\varphi(y')+y_n),$$
we have that $\Psi\in C^{1,\alpha}(B_{r_2})$ and there exist $C_1,\theta_1,\theta_2$,
positive constants, $0<\theta_i<1$, $i=1,2$, depending on $L$ only such that for $r_i=\theta_i r_0$, $i=1,2$,
\begin{eqnarray*}
&&\Psi (B_{r_2})\subset B_{r_1},\\
&&\Psi (y',0)=(y',\varphi(y')),\textrm{ for every }y'\in B'_{r_2}\subseteq \mathbb{R}^{n-1},\\
&&\Psi (B^{+}_{r_2})\subset \Omega \cap B_{r_1},\\
&&\frac{1}{2}\left\vert y-z\right\vert \leq \left\vert \Psi(y)-\Psi(z)\right\vert
\leq C_{1}\left\vert y-z\right\vert,\forall \ y,z\in B'_{r_2},\\
&&\det D\Psi=1.
\end{eqnarray*}
Denoting by
\begin{eqnarray*}
&&\sigma(y)=(D\Psi^{-1})(\Psi (y))(D\Psi^{-1})^{T}(\Psi(y)),\\
&&\tilde u(y,t)=u(\Psi (y),t),\qquad
\gamma'(y,t)=\gamma(\Psi (y),t),
\end{eqnarray*}
we have
\begin{eqnarray*}
&&\frac{1}{2^{n+2}}\left\vert \xi \right\vert ^{2}\leq \sigma (y)\xi \cdot \xi
\leq C_{2}\left\vert \xi \right\vert ^{2},\quad\forall \ y\in B^{+}_{\rho_{2}},\xi \in \mathbb{R}^{n},\\
&&\left\vert \sigma(y)-\sigma(z)\right\vert \leq\frac{C_{3}}{\rho _{0}}
\left\vert y-z\right\vert,\quad\forall \ y,z\in B^{+}_{\rho _{2}},
\end{eqnarray*}
where $C_{2}$, $C_{3}$ depend on $L$. Moreover $\tilde u(y,t)$ satisfies the problem
\begin{equation}
\label{til}
\left\{\begin{array}{ll}
\mathrm{div}(\sigma(y)\nabla \tilde u(y,t))=\tilde u_{t}(y,t), & B^{+}_{\rho _{2}}\times(T_{1},T_{2}),\\[2mm]
\sigma (y',0)\nabla \tilde u((y',0),t)\cdot \nu(y',0)+\gamma'((y',0),t)\tilde u((y',0),t)=0, &
\left\vert y'\right\vert <\rho_{2},\,t\in(T_{1},T_{2}),
\end{array}\right.
\end{equation}
where $\nu (y',0)=(0,0,...,-1)$.
By a standard scaling argument and applying Lemma \ref{lemma1} and \ref{lemma2},
we obtain Proposition \ref{harnack-lemma}.

\end{document}